\DeclareMathOperator{\M}{\mathcal{M}}
\DeclareMathOperator{\Y}{\mathbb{Y}}
\DeclareMathOperator{\V}{\mathcal{V}}
\DeclareMathOperator{\R}{\mathbb{R}}
\DeclareMathOperator{\diam}{diam}
\DeclareMathOperator{\X}{\mathbb{X}}
\begin{document}

\title[Optimal cubature formulas on manifolds]{Asymptotically optimal cubature formulas on manifolds for prefixed weights}

\author[M. Ehler]{Martin Ehler}
\address[M. Ehler]{Department of Mathematics, University of Vienna, Oskar-Morgenstern-Platz 1, A-1090 Vienna, Austria}
\email{martin.ehler@univie.ac.at}

\author[U. Etayo]{Uju\'e Etayo}
\address[U. Etayo]{5010 Institute of Analysis and Number Theory 8010 Graz, Kopernikusgasse 24/II}
\email{etayo@math.tugraz.at}

\author[B. Gariboldi]{Bianca Gariboldi}
\address[B. Gariboldi]{Dipartimento di Ingegneria Gestionale, dell'Informazione e della Produzione,
Universit\`a degli Studi di Bergamo, Viale Marconi 5, Dalmine BG, Italy}
\email{biancamaria.gariboldi@unibg.it}

\author[G. Gigante]{Giacomo Gigante}
\address[G. Gigante]{Dipartimento di Ingegneria Gestionale, dell'Informazione e della Produzione,
Universit\`a degli Studi di Bergamo, Viale Marconi 5, Dalmine BG, Italy}
\email{giacomo.gigante@unibg.it}

\author[T. Peter]{Thomas Peter}
\address[T. Peter]{Department of Mathematics, University of Vienna, Oskar-Morgenstern-Platz 1, A-1090 Vienna, Austria}
\email{thomas.peter@univie.ac.at}

\begin{abstract}
We propose a new extension of the proof of the Korevaar-Meyers conjecture by Bondarenko, Radchenko and Viazovska for cubature formulas with prefixed weights (we fix different weights for different points) on algebraic and Riemannian manifolds.
\end{abstract}

\subjclass[2010]{41A55; 41A63; 52C35}

\keywords{Cubature formulas, Sampling in Riemannian manifolds, weighted t-desings}

\thanks{Martin Ehler has been supported by the Vienna Science and
Technology Fund WWTF project VRG12-009.}
\thanks{Uju\'e Etayo has been supported by the Austrian Science Fund FWF project F5503 (part of the Special Research Program (SFB) Quasi-Monte Carlo Methods: Theory and Applications), by MTM2017-83816-P from Spanish Ministry of Science MICINN and by 21.SI01.64658 from Universidad de Cantabria and Banco de Santander.
}
\thanks{Bianca Gariboldi and Giacomo Gigante have been supported by an Italian GNAMPA 2019 project.}

\maketitle

\newtheorem{Theorem}{Theorem}
\newtheorem{Cor}[Theorem]{Corollary}
\newtheorem{lemma}[Theorem]{Lemma}
\newtheorem{definition}[Theorem]{Definition}
\newtheorem{prop}[Theorem]{Proposition}
\newtheorem{ex}[Theorem]{Example}
\newtheorem{remark}[Theorem]{Remark}

\setcounter{tocdepth}{1}
\tableofcontents

\section{Introduction}

Briefly speaking, a set of points $\{x_{j}\}_{j=1}^{N}$ with positive weights $\{\omega_{j} \}_{j=1}^{N}$ is a cubature of strength $L$ if all polynomials of degree $L$ are exactly integrated by the weighted sums over sampling values (a more detailed definition will be given in Section \ref{sec:def}).
Cubature formulas have been deeply studied from both theoretical and practical points of view.
Their associated literature is extense, we refer for example to the books \cite{sobolev2013theory, ST} and references therein.
Not many explicit examples of cubature formulas are known, for a recopilation of cubature formulas, one can check \cite{COOLS2003445}.
The most studied case is the one of the so called $L$-designs, where all the weights are equal, see \cite{BRV,DGS,KM}.

Here we care about the existence of cubature formulas from a non-constructive point of view.
We know that the existence of a cubature of strength $L$ depends on a relation between the dimension $d$ of the manifold, the degree $L$ of the polynomials and the number $N$ of points and weights. 
In particular, if $C_{\mathcal{M}}$ is a constant depending only on the manifold $\mathcal{M}$ and $N\geq C_{\mathcal{M}}L^{d}$ then there exists a cubature of strength $L$ for many smooth compact manifolds $\mathcal{M}$, see \cite{HP}. 

If weights are fixed a priori, then the existence of cubature points is more complicated. 
Bondarenko, Radchenko and Viazovska showed in  \cite{BRV} that there is a constant $C_{d}$ such that for every $N \geq C_{d}L^{d}$ there exists an $L$-design in the $d$-dimensional sphere with exactly $N$ nodes. 
Later, Etayo, Marzo and Ortega-Cerd\`{a}  generalized this result in \cite{EMOC} to the case of a compact connected affine algebraic manifold. 
Gariboldi and Gigante proved the analogous result on a compact connected oriented Riemannian manifold, when polynomials are replaced by eigenfunctions of the Laplace-Beltrami operator, see \cite{GG}. 
For the sphere, each eigenfunction of the Laplace-Beltrami operator is the restriction of a polynomial, so that both, \cite{EMOC} and \cite{GG} apply. 
In general though, the eigenfunctions of the Laplace-Beltrami operator on an algebraic manifold are not necessarily polynomials, see Section \ref{elipse} for the ellipse. 

In this paper we study the existence of cubature points for fixed weights that are not all equal for the previous cases: compact connected oriented Riemannian manifolds with Laplacian eigenfunctions and compact connected real algebraic manifolds with algebraic polynomials. 
In order to do so, we prove the existence of weighted area partitions on manifolds and some Marcinkiewicz-Zygmund type inequalities for gradients of polynomials and diffusion polynomials.

\subsection{Organization}
We define cubature formulas on Riemannian and algebraic manifolds in Section \ref{sec:def}. 
In Section \ref{SecMR} we state the Main result and in Section \ref{SecIn} we provide the proof based on an existence result from Brouwer degree theory. The remaining part of the manuscript is dedicated to verify that the assumptions of the Brouwer degree theorem are satisfied. 
In particular, in Section \ref{SecP1} we prove a result that may be of independent interest: the existence of a partition of any Riemannian manifold into parts with given prefixed areas.
In Appendix  \ref{SecM} we present some of the properties of Riemannian manifolds that are used in the proofs.


\section{Cubature formulas on Riemannian and on algebraic manifolds}\label{sec:def}

\subsection{Cubature formulas on Riemannian manifolds}\label{sec:M}

Let $\mathcal{M}$ be a $d$-dimensional connected compact orientable Riemannian manifold without boundary, where the measure $\mu_{\M}$ induced by the Riemannian metric is normalized so that $\mu_{\M}(\mathcal{M})=1$. 
Eigenfunctions of the Laplacian on $\M$ are the key ingredient to carry an analogue of Fourier series on manifolds, the so called Sturm-Liuville's decomposition, here in Proposition \ref{SL}. 
Let $\left\{  \varphi_{k}\right\}  _{k=0}^{\infty}$ be an orthonormal basis of eigenfunctions of the (positive) Laplace-Beltrami operator, with eigenvalues $0=\lambda_{0}^{2}<\lambda_{1}^{2}\leq\lambda_{2}^{2}\leq\ldots$ , $\Delta\varphi_{k}=\lambda_{k}^{2}\varphi_{k}$ and let $\mathcal{M}_{L}=\mathrm{span}\{\varphi_{k}: \lambda_{k}\leq L \}$ be the space of diffusion polynomials of bandwidth $L\geq 0$.
\begin{definition}\label{def:1}
For $N$ points $\{x_{j} \}_{j=1}^{N} \subset \mathcal{M}$ and weights $\{\omega_{j}\}_{j=1}^{N} \subset \mathbb{R}$, we say that $\{(x_{j},\omega_{j}) \}_{j=1}^{N} $ is a cubature of strength $L$ if
\begin{equation}\label{eq:cuba}
\int_{\mathcal{M}} P(x)d\mu_{\M}(x) = \sum_{j=1}^{N} \omega_{j}P(x_{j}) \quad\text{for all }P\in\mathcal{M}_{L}.
\end{equation}
\end{definition}
Since the constant function is contained in $\mathcal{M}_{L}$, \eqref{eq:cuba} implies $\sum_{j=1}^{N} \omega_{j}=1$. 
Moreover, by orthogonality of the eigenfunctions $\varphi_{k}$
\[
\int_{\mathcal{M}} \varphi_{k}(x)d\mu_{\M}(x)=0 \quad \text{for all } k\geq 1,
\]
hence $\{(x_{j},\omega_{j}) \}_{j=1}^{N} $ is a cubature of strength $L$ if and only if
\[
\sum_{j=1}^{N} \omega_{j}P(x_{j})=0 \quad \text{for all } P\in \mathcal{M}_{L}^{0},
\]
where $\mathcal{M}_{L}^{0}=\mathrm{span}\{\varphi_{k}: 0<\lambda_{k}\leq L \}$.

For each $L\geq 0$, we denote by $N(L)$ the minimal number of points in a cubature of strength $L$.

\begin{prop}\label{prop:1}
There exists a positive constant $c_{\mathcal{M}}$ such that if $\{(x_{j},\omega_{j}) \}_{j=1}^{N(L)}$ is a cubature of strength $L$, then  $N(L)\geq c_{\mathcal{M}}L^{d}$ for every $L\geq 0$.
\end{prop}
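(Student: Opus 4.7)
The lower bound is classical and parallels the analogous estimates in \cite{BRV, EMOC, GG}. My plan is to combine Weyl's asymptotic law for the Laplace--Beltrami operator with a Gram-matrix argument based on the reproducing kernel of $\mathcal{M}_L$.

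First I would invoke Weyl's law: $\dim\mathcal{M}_L=\#\{k:\lambda_k\leq L\}\asymp L^d$ as $L\to\infty$, with a constant depending only on the geometry of $\mathcal{M}$. Next I would introduce the reproducing kernel
\[
K_L(x,y)=\sum_{\lambda_k\leq L}\varphi_k(x)\varphi_k(y)
\]
of $\mathcal{M}_L$ in $L^2(\mu_{\mathcal{M}})$. H\"ormander's pointwise estimate for spectral projectors provides the uniform bound $K_L(x,x)\asymp L^d$, and hence via Cauchy--Schwarz on the reproducing-kernel inner product $|K_L(x,y)|\leq \sqrt{K_L(x,x)K_L(y,y)}\lesssim L^d$ uniformly in $x,y\in\mathcal{M}$.

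Because $K_L(y,\cdot)\in\mathcal{M}_L$ and $\int K_L(y,x)\,d\mu_{\mathcal{M}}(x)=1$ (using that $\varphi_0\equiv 1$ and orthogonality), the cubature formula yields the key identity
\[
\sum_{j=1}^{N(L)}\omega_j\,K_L(y,x_j)=1\qquad\text{for every }y\in\mathcal{M}.
\]
Setting $y=x_i$ produces the linear system $K\omega=\mathbf{1}_{N(L)}$, where $K$ is the $N(L)\times N(L)$ Gram matrix with entries $K_{ij}=K_L(x_i,x_j)$. Squaring the identity, integrating over $y$, and invoking the reproducing property gives $\omega^{\top} K\omega = 1$, while $\|K\omega\|_2^2 = N(L)$ supplies the additional quadratic constraint $\omega^{\top} K^2\omega = N(L)$.

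The last step is to convert these constraints into the lower bound on $N(L)$. Observe that $K$ is the Gram matrix of the $N(L)$ functions $\{K_L(\cdot,x_j)\}\subset \mathcal{M}_L$, so $\mathrm{rank}(K)\leq\dim\mathcal{M}_L\asymp L^d$, and its trace satisfies $\mathrm{tr}(K)=\sum_j K_L(x_j,x_j)\leq C N(L) L^d$. I expect the main obstacle to sit precisely here: a direct Cauchy--Schwarz estimate on $1=\omega^{\top} K\omega$ combined with $|K_{ij}|\lesssim L^d$ produces only $\sum_j|\omega_j|\gtrsim L^{-d/2}$, which is vacuous. To promote this to $N(L)\geq c_{\mathcal{M}} L^d$, a more delicate spectral analysis of $K$ is required, exploiting simultaneously the rank bound coming from the Weyl law, the trace bound coming from the sharp H\"ormander kernel estimate, and the two quadratic identities $\omega^{\top}K\omega=1$ and $\omega^{\top}K^2\omega=N(L)$. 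This spectral bookkeeping is the technical heart of the argument.
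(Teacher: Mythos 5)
The paper's proof is much more direct: it simply quotes two black-box results from \cite{BCCGST}. Theorem 2.12 there gives the smoothing error bound
\[
\Bigl|\int_{\mathcal M}f\,d\mu_{\mathcal M}-\sum_j\omega_j f(x_j)\Bigr|\le\beta L^{-\alpha}\|f\|_{W^{\alpha,1}}
\]
for any cubature of strength $L$, while Theorem 2.16 produces, for the point set $\{x_j\}$ and weights $\{\omega_j\}$ of cardinality $N(L)$, a witness $f_L$ with discrepancy at least $\gamma N(L)^{-\alpha/d}\|f_L\|_{W^{\alpha,1}}$; comparing the two displays gives $N(L)\ge(\gamma/\beta)^{d/\alpha}L^d$ immediately. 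Your reproducing-kernel route is genuinely different in spirit, but it has a gap that is not merely a matter of ``spectral bookkeeping.''

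The concrete problem is that the set of facts you propose to use --- $K$ PSD with $\operatorname{rank}K\lesssim L^d$, $K_{ii}\asymp L^d$, $|K_{ij}|\lesssim L^d$, $K\omega=\mathbf 1$, $\mathbf 1^\top\omega=1$ (and the two ``quadratic identities,'' which are both consequences of the last two) --- does not logically imply $N(L)\gtrsim L^d$. For instance, with $N=2$ the matrix $K=L^d\begin{pmatrix}1&a\\a&1\end{pmatrix}$ with $a=2/L^d-1$ and weights $\omega_1=\omega_2=1/2$ satisfies every one of your abstract constraints (it is positive definite of rank $2$, the diagonal entries are $L^d$, the off-diagonal entry has modulus $<L^d$, $K\omega=\mathbf 1$, $\omega^\top K\omega=1$, $\omega^\top K^2\omega=2$) for any $L$. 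So no amount of eigenvalue manipulation starting from those facts alone can yield the lower bound. What is missing is the \emph{off-diagonal decay} of the spectral projector kernel, $|K_L(x,y)|\lesssim L^d(1+L|x-y|)^{-M}$, without which the structure of an actual reproducing kernel on a manifold is not captured; and once one brings in that localization one is essentially back to the Filbir--Mhaskar / \cite{BCCGST} machinery. Note also that the paper allows signed weights, which makes a direct Gram-matrix argument even more delicate. The Sobolev-error route taken in the paper bypasses all of this.
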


\proof
The proposition is a coming-on of the results in \cite{BCCGST}.
Let $\{(x_{j},\omega_{j}) \}_{j=1}^{N(L)}$ be a cubature of strength $L$. By  \cite[Theorem 2.12]{BCCGST} there exists a constant $\beta>0$ such that for every function $f$ in the Sobolev space $W^{\alpha,1}(\mathcal{M})$ with $\alpha>d$ one has
\[
\left\vert \int_{\mathcal{M}} f(x)d\mu_{\M}(x) - \sum_{j=1}^{N(L)} \omega_{j}f(x_{j}) \right\vert \leq \beta L^{-\alpha}\| f\|_{W^{\alpha,1}}.
\]
By  \cite[Theorem 2.16]{BCCGST}, there exists also a constant $\gamma>0$ such that for every $L$ there exists a function $f_{L}\in W^{\alpha,1}$ with
\[
\left\vert \int_{\mathcal{M}} f_{L}(x)d\mu_{\M}(x) - \sum_{j=1}^{N(L)} \omega_{j}f_{L}(x_{j}) \right\vert \geq \gamma N(L)^{-\alpha/d}\|f_{L} \|_{W^{\alpha,1}}.
\]
Therefore
\[
\gamma N(L)^{-\alpha/d}\|f_{L} \|_{W^{\alpha,1}}\leq \left\vert \int_{\mathcal{M}} f_{L}(x)d\mu_{\M}(x) - \sum_{j=1}^{N(L)} \omega_{j}f_{L}(x_{j}) \right\vert \leq \beta L^{-\alpha}\| f_{L}\|_{W^{\alpha,1}}
\]
and this gives $N(L)\geq \left(\frac{\gamma}{\beta} \right)^{d/\alpha}L^{d}$. 
\endproof

In \cite{GG}, the authors proved that if $N\geq C_{\mathcal{M}}L^{d}$ with $C_{\mathcal{M}}$ a fixed constant depending only on $\mathcal{M}$, then there exists a set of points $\{x_{j} \}_{j=1}^{N}$ such that $\{(x_{j},1/N) \}_{j=1}^{N}$ is a cubature of strength $L$. What happens if weights are fixed, but not all equal? Existence of a cubature of strength $L$ with the same cardinality in this case is not always guaranteed, as we show in Example \ref{Ex1}.

\begin{ex}\label{Ex1}
This example is a follow up of the theory exposed in \cite{BCCGST}.
Let $\mathcal{M}$ be a $d$-dimensional manifold as described in the beginning of this section. 
Let the weights be given by
\[
\omega_{1}=1-\frac{1}{N+1}, \ \quad \omega_{j}=\frac{1}{(N+1)(N-1)}, \ 2 \leq j \leq  N.
\]
Assume that $\{(x_{j}, \omega_{j}) \}_{j=1}^{N}$ is a cubature of strength $L$ on $\mathcal{M}$. Let $f$ be a function such that $f$ is supported on a ball of radius $cN^{-1/d}$ around $x_{1}$, $f(x_{1})=1$, $\int_{\mathcal{M}}f(x)d\mu_{\M}(x)=N^{-1}$ and 
\[
\|f \|_{W^{d+d\varepsilon,1}}\leq cN^{\frac{d+d\varepsilon}{d}-1}=cN^{\varepsilon},
\]
where $\varepsilon$ is a small positive number and $f$ is a function as in the proof of Theorem 2.16 in \cite{BCCGST}.

By \cite[Theorem 2.12]{BCCGST}, we have
\[
\left\vert \int_{\mathcal{M}} f(x)d\mu_{\M}(x) - \sum_{j=1}^{N(L)} \omega_{j}f(x_{j}) \right\vert \leq C L^{-(d+d\varepsilon)}\| f\|_{W^{d+d\varepsilon,1}}.
\]
Since
\begin{multline*}
\left\vert \int_{\mathcal{M}} f(x)d\mu_{\M}(x) - \sum_{j=1}^{N(L)} \omega_{j}f(x_{j}) \right\vert 
\\
= 
\left\vert \frac{1}{N} - \left(1-\frac{1}{N+1} \right)f(x_{1}) + O\left(N\frac{1}{N^{2}} \right) \right\vert 
= 
1+O\left(\frac{1}{N} \right)
\end{multline*}
and 
\[
L^{-(d+d\varepsilon)}\| f\|_{W^{d+d\varepsilon,1}}\leq L^{-(d+d\varepsilon)}N^{\varepsilon},
\]
one has
\[
N\geq CL^{\frac{d+d\varepsilon}{\varepsilon}}.
\]
Therefore $\{(x_{j}, \omega_{j}) \}_{j=1}^{N}$ cannot be a cubature of strength $L$ with the above choice of weights under the only hypothesis $N\geq CL^{d}$.
\end{ex}
In fact, it has been recently proved, see \cite{BGG}, that the following estimate holds
\[
1\ge C_{\mathcal M} L^d\sum_{j=1}^N \omega_j^2
\]
for all real weights $\{ \omega_{j} \}_{j=1}^{N}$ such that $\sum_{j=1}^{N} \omega_{j}=1$.

\subsection{Cubature formulas on algebraic manifolds}\label{sec:alg and ellipse}
Let $\mathcal{V}\subset \mathbb{R}^n$ be a $d$-dimensional smooth, connected, compact affine algebraic manifold. 
$\mathcal{V}$ carries a Riemannian structure with normalized measure $\mu_{\V}$ induced by the Riemannian metric, such that $\mu_{\V}(\V) = 1$. 
Hence, the definitions and statements of the previous section do apply in principle (provided that $\mathcal{V}$ is orientable). 
Here, however, we replace diffusion polynomials $\mathcal{M}_L$ by the algebraic polynomials on $\R^n$ restricted to $\mathcal{V}$ and denoted by 
\begin{equation*}
\mathcal{V}_L:=\{P_{|\mathcal{V}} : P\in \R[z_1,\ldots,z_n] : \text{ total degree of $P$ is $\leq L$}\}. 
\end{equation*}
For an algebraic manifold and in analogy to Definition \ref{def:1}, we now define the concept of algebraic cubatures.
\begin{definition}\label{def:2}
For $N$ points $\{x_{j} \}_{j=1}^{N} \subset \mathcal{V}$ and weights $\{\omega_{j}\}_{j=1}^{N} \subset \mathbb{R}$, we say that $\{(x_{j},\omega_{j}) \}_{j=1}^{N} $ is an algebraic cubature of strength $L$ if
\[
\int_{\mathcal{V}} P(x)d\mu_{\V}(x) = \sum_{j=1}^{N} \omega_{j}P(x_{j}) \quad\text{for all }P\in\mathcal{V}_L.
\]
\end{definition}
Let $\mathcal{V}_L^0$ denote the orthogonal complement of the constant function within $\mathcal{V}_L$. As before, if $\sum_{j=1}^N \omega_j=1$, then $\{(x_{j},\omega_{j}) \}_{j=1}^{N} $ is an algebraic cubature of strength $L$ if and only if 
\[
\sum_{j=1}^{N} \omega_{j}P(x_{j})=0 \quad \text{for all } P\in \mathcal{V}_{L}^{0}.
\]

The analogue of Proposition \ref{prop:1} also holds for an algebraic manifold, i.e., there exists a positive constant $c_{\mathcal{V}}$ such that if $\{(x_{j},\omega_{j}) \}_{j=1}^{N}$ is an algebraic cubature of strength $L$, then $N\geq c_{\mathcal{V}}L^{d}$ for every $L\geq 0$, the proof of \cite[Proposition 2.1.]{EMOC} for $t$-designs can be extended to this case piece by piece.

Notice that  given a Riemannian algebraic manifold, we have two different definitions for a cubature of strength $L$, the one given in Definition \ref{def:1} and the one given in Definition \ref{def:2}.
Observe though that the first definition is intrinsic, whereas the second is extrinsic and depends on the specific embedding of the manifold in the Euclidean space $\mathbb R^n$.

\subsection{On the relation between polynomials and diffusion polynomials}\label{elipse}

The relation between polynomials and diffusion polynomials on an algebraic manifold has not been deeply understood. 
Nevertheless, we know this relation for some particular manifolds.
In the case of the sphere $\mathbb{S}^{d}$, the eigenfunctions of the Laplacian are polynomials in the ambient space $\mathbb R^{d+1}$ restricted to the sphere. 
The Grassmannian manifold $\mathbb{G}_{k,m}$,
consisting of the $k$-dimensional subspaces of $\mathbb R^m$, can be isometrically embedded into $\mathbb R^{m^2}$ by seeing it as the set of symmetric $m\times m$ matrices 
which are projection operators and have trace equal to $k$ (see \cite[Section 1.3.2.]{chi}).
Any diffusion polynomial on the Grassmannian manifold is then the restriction of a polynomial in the ambient space $\mathbb R^{m^2}$, cf.~\cite{BEG}. 
In general, though, eigenfunctions of the Laplacian on an algebraic manifold $\mathcal V$ are not necessarily restrictions of polynomials. 
In the following example we show that there are diffusion polynomials on the ellipse that are not restrictions of polynomials in the ambient space. Notice that the circle and the ellipse are different algebraic manifolds, but they coincide as Riemannian manifolds.

The (positive) Laplacian $\Delta_{\R/2\pi\mathbb{Z}}$ on $\R/2\pi\mathbb{Z}$ is simply $-\partial_t^2$ acting on $2\pi \mathbb{Z}$ periodic real-valued functions on $\R$. Its eigenvalues are $k^2$, for $k\in\mathbb{N}$, with associated eigenfunctions 
\begin{equation*}
t\mapsto \cos( k t),\qquad t\mapsto \sin( k t). 
\end{equation*}
So we have two eigenfunctions associated to each eigenvalue $k^2$.

\begin{definition}\label{def:3}
For fixed $a,b>0$, we consider the ellipse 
\begin{equation*}
E_{a,b}=\left\lbrace (x,y)\in\R^2 : \frac{x^2}{a^2}+\frac{y^2}{b^2}=1\right\rbrace,
\end{equation*}
which is parametrized by 
\begin{align*}
u_{a,b}:\R/2\pi\mathbb{Z} \rightarrow E_{a,b},\quad t\mapsto (a\cos(t),b\sin(t)).
\end{align*}
\end{definition}

The circle is a particular case of the ellipse, for which the relations of Laplacian eigenfunctions and polynomials are well-studied.
\begin{ex}[Circle $\mathbb{S}^1$]
For $a=b=1$, the mapping $u_{1,1}$ is an arc-length parametrization of $\mathbb{S}^1$, hence, an isometry, so that 
\begin{equation}\label{eq:a=b=1}
\Delta_{\mathbb{S}^1} f = (\Delta_{\R/2\pi\mathbb{Z}}(f\circ u_{1,1}))\circ u^{-1}_{1,1}
\end{equation}
holds for every $f\in C^{\infty}(\mathbb{S}^{1})$, see Proposition \ref{isometry}. 
In particular, the eigenvalues of $\Delta_{\mathbb{S}^1} $ are $k^2$ with associated eigenfunctions 
\begin{align*}
f_k &: \mathbb{S}^1\rightarrow\R, & g_k &: \mathbb{S}^1\rightarrow\R, \\
& (x,y)\mapsto \cos(k u_{1,1}^{-1}(x,y)), & & (x,y)\mapsto \sin(k u_{1,1}^{-1}(x,y)).
\end{align*}
%
Hence, we observe, for $t\in\R/2\pi\mathbb{Z}$,
\begin{align*}
f_k(\cos( t),\sin( t)) & =\cos( k t),\\
g_k(\cos( t),\sin( t)) &=\sin( k t).
\end{align*}
All eigenfunctions of $\Delta_{\mathbb{S}^1} $ are restrictions of algebraic polynomials in $\mathbb{R}^2$, which can be derived from the Chebycheff-polynomials of first and second type, $T_k$ and $U_k$, via
\begin{align*}
\R^2\ni(x,y)&\mapsto T_k(x),& T_k(\cos( t))&=\cos( k t),\\
\R^2\ni(x,y)&\mapsto U_{k-1}(x)y,& U_{k-1}(\cos( t)) \sin( t)& = \sin( k t),\quad t\in\R.
\end{align*}
\end{ex}
We now state that the situation is very different for $a\neq b$.
\begin{prop}\label{p7}
If $a\neq b$, then each nonzero eigenvalue of the Laplacian on $E_{a,b}$ has an eigenfunction that is not the restriction of any algebraic polynomial on $\mathbb{R}^{2}$ with complex coefficients. 
\end{prop}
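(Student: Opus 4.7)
My plan is to work through the parametrization $u_{a,b}$, which pulls the eigenvalue problem back to $\R/2\pi\mathbb{Z}$. Setting $\rho(t):=|u_{a,b}'(t)|=\sqrt{a^{2}\sin^{2}t+b^{2}\cos^{2}t}$ and $s(t):=\int_{0}^{t}\rho(\tau)\,d\tau$, the pullback metric on $\R/2\pi\mathbb{Z}$ is $\rho(t)^{2}dt^{2}$, so $t\mapsto s(t)$ realizes an isometry with the flat circle of length $L:=s(2\pi)$. Consequently the nonzero eigenvalues of the Laplacian on $E_{a,b}$ are exactly $\lambda_{k}^{2}:=(2\pi k/L)^{2}$ for $k\geq 1$, each of multiplicity two, with associated eigenspace (pulled back via $u_{a,b}$) spanned by $t\mapsto\cos(\lambda_{k}s(t))$ and $t\mapsto\sin(\lambda_{k}s(t))$.

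Next I would characterize polynomial restrictions: a function $f:E_{a,b}\to\mathbb{C}$ is the restriction of some $P\in\mathbb{C}[x,y]$ if and only if $f\circ u_{a,b}$ is a complex trigonometric polynomial in $t$. The forward implication is immediate from $x=a\cos t$, $y=b\sin t$, and the converse follows from the Chebyshev identities $\cos(kt)=T_{k}(x/a)$ and $\sin(kt)=U_{k-1}(x/a)\,y/b$. It therefore suffices to produce, in the span of $\cos(\lambda_{k}s(t))$ and $\sin(\lambda_{k}s(t))$, a single element that is not a trigonometric polynomial; I intend in fact to prove the stronger statement that $e^{i\lambda_{k}s(t)}$ itself is not a trigonometric polynomial.

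Arguing by contradiction, I would assume $F(t):=e^{i\lambda_{k}s(t)}$ is a trigonometric polynomial; writing $z=e^{it}$ and $\tilde F(z):=F(t)$ then makes $\tilde F$ a nonzero Laurent polynomial in $z$. Since $F'(t)=i\lambda_{k}\rho(t)F(t)$ and, by the chain rule, $F'(t)=iz\tilde F'(z)$, I would get
\[
\rho(t)=\frac{z\,\tilde F'(z)}{\lambda_{k}\,\tilde F(z)},
\]
so $\rho^{2}$ is a rational function of $z$ that is a perfect square in $\mathbb{C}(z)$. On the other hand, a direct computation using $4\cos^{2}t=z^{2}+2+z^{-2}$ and $4\sin^{2}t=2-z^{2}-z^{-2}$ yields
\[
4z^{2}\rho(t)^{2}=(b^{2}-a^{2})z^{4}+2(a^{2}+b^{2})z^{2}+(b^{2}-a^{2}).
\]
As a quadratic in $w=z^{2}$, the right-hand side has discriminant $16a^{2}b^{2}>0$ and nonzero constant term, so it has two distinct nonzero roots in $w$; the quartic in $z$ therefore has four distinct simple zeros in $\mathbb{C}^{*}$. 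This forces $\rho^{2}$ to have simple (odd-order) zeros in $\mathbb{C}(z)$, contradicting that it is a square.

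The main obstacle is really just this last rationality step, which rests on the classical fact that a polynomial with a simple root admits no rational square root over $\mathbb{C}$. Everything else—the spectral description on a one-dimensional closed Riemannian manifold via passage to arc length, and the identification of polynomial restrictions with complex trigonometric polynomials—is routine.
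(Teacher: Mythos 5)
Your proof is correct, and the skeleton — pulling the problem back through $u_{a,b}$, identifying the eigenfunctions via arc length, and reducing ``polynomial restriction'' to ``trigonometric polynomial in $t$'' — is the same as the paper's. The divergence is in the decisive algebraic step. The paper assumes both real eigenfunctions $\cos(\lambda_k s(t))$ and $\sin(\lambda_k s(t))$ are trigonometric polynomials, differentiates, and uses the Pythagorean identity $\rho=\rho\cos^2(\lambda_k s)+\rho\sin^2(\lambda_k s)$ together with the fact that trigonometric polynomials form an algebra to conclude that $\rho(t)=h_{a,b}'(t)$ is itself a trigonometric polynomial; it then inspects degrees in the identity $\rho(t)^2=a^2+b^2+(b^2-a^2)\cos(2t)$ and finds that $\rho$ would have to be $c_0+c_1\cos t$ with no cross term, forcing $a^2=b^2$. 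You instead work directly with $F(t)=e^{i\lambda_k s(t)}$, push everything to the Laurent polynomial ring $\mathbb{C}[z,z^{-1}]$, and read off from the ODE $F'=i\lambda_k\rho F$ that $\rho$ is a rational function of $z$; then $4z^2\rho^2=(b^2-a^2)z^4+2(a^2+b^2)z^2+(b^2-a^2)$ has four simple nonzero roots when $a\ne b$, so $\rho^2$ cannot be a square in $\mathbb{C}(z)$. This is a cleaner closing argument: it avoids the slightly ad hoc degree-matching of the paper and replaces it with the structural observation that a rational square has only even-order zeros, which generalizes painlessly. The price is that it implicitly uses the identity theorem (the relation $\rho=z\tilde F'/(\lambda_k\tilde F)$ is first derived on the unit circle and then treated as an identity in $\mathbb{C}(z)$, which is fine but should be flagged); the paper's argument stays entirely inside real trigonometric polynomials and is more elementary. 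Both prove the required statement — note for clarity that exhibiting a complex eigenfunction $e^{i\lambda_k s}$ that is not a polynomial restriction is equivalent to showing that at least one of the real eigenfunctions $\cos(\lambda_k s)$, $\sin(\lambda_k s)$ fails to be one, since $e^{\pm i\lambda_k s}$ and the real pair generate the same $\mathbb{C}$-span.
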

\begin{proof}
Let us consider the parametrization of the ellipse $u_{a,b}$ given in Definition \ref{def:3}.
For $a\neq b$, $u_{a,b}$ is not an isometry. To compute the arc-length parametrization of $E_{a,b}$, we define $\ell_{a,b}:=\int_{-\pi}^{\pi}\|\dot{u}_{a,b}(t)\|$ and 
\begin{equation}\label{eq:1}
h_{a,b} : [0,2\pi] \rightarrow [0,\ell_{a,b}],\quad t\mapsto  \int_{0}^{t} \|\dot{u}_{a,b}(s)\|\mathrm{d} s.
\end{equation}
We now identify $h_{a,b}$ with its periodic extension $h_{a,b}:\R/2\pi\mathbb{Z}\rightarrow \R/\ell_{a,b}\mathbb{Z}$. 
The arc-length parametrization of $E_{a,b}$ is
\begin{equation*}
\psi_{a,b} : \R/\ell_{a,b}\mathbb{Z} \rightarrow E_{a,b},\quad t\mapsto u_{a,b}(h_{a,b}^{-1}(t))= (a\cos(h_{a,b}^{-1}(t)),b\sin(h_{a,b}^{-1}(t))).
\end{equation*}
We deduce that two linearly independent eigenfunctions on $E_{a,b}$ with respect to the eigenvalue $k^2$, for $0<k\in\mathbb{N}$, are 
\begin{align*}
f_k &: E_{a,b}\rightarrow\R, & g_k &: E_{a,b}\rightarrow\R, \\
& (x,y)\mapsto  \cos\left(\frac{2\pi}{\ell_{a,b}}k\psi_{a,b}^{-1}(x,y)\right), & & (x,y)\mapsto \sin\left(\frac{2\pi}{\ell_{a,b}}k\psi_{a,b}^{-1}(x,y)\right).
\end{align*}
%
They span the eigenspace associated to $k^2$. 
Since $\psi_{a,b}$, $u_{a,b}$, and $h_{a,b}$ are bijections, this implies, for $t\in \R/2\pi\mathbb{Z}$,
\begin{align}
f_k(a\cos(t),b\sin(t))&=\cos\left( \frac{2\pi}{\ell_{a,b}}k h_{a,b}(t)\right),\label{fk 1}\\
g_k(a\cos(t),b\sin(t))&=\sin\left( \frac{2\pi}{\ell_{a,b}}k h_{a,b}(t)\right).\label{gk 1}
\end{align}
To prove our claim, we now assume that both, $f_k$ and $g_k$, are restrictions of algebraic polynomials on $\R^2$, i.e., for $x,y\in E_{a,b}$,
\begin{align*}
f_k(x,y)&= \sum_{m,n\in\mathbb{N}} \alpha_{m,n} x^my^n,\\
g_k(x,y)&= \sum_{m,n\in\mathbb{N}} \beta_{m,n} x^my^n,
\end{align*}
with finitely many nonzero coefficients $\alpha_{m,n},\beta_{m,n}\in\mathbb{C}$. Thus, \eqref{fk 1} and \eqref{gk 1} imply, for $t\in\R/2\pi \mathbb{Z}$, 
\begin{align}
\cos\left( \frac{2\pi}{\ell_{a,b}}k h_{a,b}(t)\right)&=\sum_{m,n\in\mathbb{N}} \alpha_{m,n} a^m\cos^m(t) b^n\sin^n(t),\label{eq:cos 1}\\
\sin\left( \frac{2\pi}{\ell_{a,b}}k h_{a,b}(t)\right) & = \sum_{m,n\in\mathbb{N}} \beta_{m,n} a^m\cos^m(t) b^n\sin^n(t).\label{eq:sin 1}
\end{align}
Trigonometric identities, in particular power reduction formulae and product to sum identities, imply that both, \eqref{eq:cos 1} and \eqref{eq:sin 1}, are trigonometric polynomials, i.e., finite linear combination of $\cos(lt)$ and $\sin(mt)$, $l,m\in\mathbb{N}$. Therefore, their derivatives 
\begin{align}
t & \mapsto -\sin\left( \frac{2\pi}{\ell_{a,b}}k h_{a,b}(t)\right) \frac{2\pi}{\ell_{a,b}}k h_{a,b}'(t),\label{eq:cos 2}\\
t & \mapsto \cos\left( \frac{2\pi}{\ell_{a,b}}k h_{a,b}(t)\right) \frac{2\pi}{\ell_{a,b}}k h_{a,b}'(t), \label{eq:sin 2}
\end{align}
are also trigonometric polynomials. The obvious identity 
\begin{align*}
h_{a,b}'(t) & = \cos^2\left( \frac{2\pi}{\ell_{a,b}}k h_{a,b}(t)\right) h_{a,b}'(t) + \sin^2\left( \frac{2\pi}{\ell_{a,b}}k h_{a,b}(t)\right)h_{a,b}'(t) \\
\end{align*}
implies that $h_{a,b}'$ is a trigonometric polynomial due to \eqref{eq:cos 1}, \eqref{eq:sin 1}, and \eqref{eq:cos 2}, \eqref{eq:sin 2} being trigonometric polynomials and the latter being an algebra. Hence, the definition of $h_{a,b}(t)$ in \eqref{eq:1} yields  that 
\begin{equation*}
h_{a,b}'(t)= \|\dot{u}_{a,b}(t)\|
= \sqrt{a^2\sin^2(t)+b^2\cos^2(t)}
= \sqrt{a^2+b^2+(b^2-a^2)\cos(2t)}
\end{equation*}
is a trigonometric polynomial. The infinite Taylor expansion of the square root implies that the above right-hand-side has an infinite Fourier series if and only if $b^2-a^2\neq 0$. More elementary, since $h_{a,b}'(t)$ is an even function, it must be a finite linear combination of $\cos(lt)$, $l\in\mathbb{N}$. Since the square of $h_{a,b}'(t)$ coincides with $a^2+b^2+(b^2-a^2)\cos(2t)$, the largest $l$ that can occur with nonzero coefficient in $h_{a,b}'$ is $l=1$ due to product to sum identities for the cosine. Since there is no $\cos(t)$ term in the square of $h_{a,b}'(t)$, we deduce $a^2=b^2$, which contradicts the assumption of the proposition. 
\end{proof}

\begin{remark}
Note that in Proposition \ref{p7} we prove a result stronger than needed, since we allow the polynomials to have complex coefficients meanwhile $\V_{L}$ is a vector space of  polynomials with real coefficients.
\end{remark}

\subsection{Notation}\label{sec_Notation}
We use the notation $\gtrsim$ meaning the right-hand side is less or equal to the left-hand side up to a positive constant factor that is only allowed to depend on $\M$ or $\V$ and hence on $d$. 
The symbol $\lesssim$ is used analogously.

For a more compact notation, we make the convention that $\mathbb{X}$ either denotes $\mathcal{M}$ or $\mathcal{V}$ as defined in Section \ref{sec:def}. 
Then $\mu_{\X}$ will denote respectively $\mu_{\M}$ or $\mu_{\V}$,  and $\mathbb{X}_{L}$ and $\mathbb{X}_{L}^{0}$ will do the same with $\mathcal{M}_{L}$ or $\mathcal{V}_{L}$ and $\mathcal{M}_{L}^{0}$ or $\mathcal{V}_{L}^{0}$.

\section{Main result}\label{SecMR}
Our main result holds for the Riemannian manifold $\mathcal{M}$ with diffusion polynomials and for the algebraic manifold $\mathcal{V}$ with algebraic polynomials. 
\begin{Theorem}[Main result]\label{mainM}
Let $h=1$ if $\X=\M$, and $h=d$ if $\X=\V$.
There exists a constant $C = C(\mathcal{\X})$ such that 
for all $0<a\leq 1 \leq b$, if
\[
N\geq C  b \left( \frac{b}{a}\right)^{2h} L^{d}
\]
and if the weights $\{\omega_{j} \}_{j=1}^{N}$ are such that 
\[
\sum_{j=1}^{N} \omega_{j}=1 \ \textit{ and } \ \frac{a}{N}\leq \omega_{j} \leq \frac{b}{N},
\]
then there exists $\{x_{j} \}_{j=1}^{N}\subset \X$ such that $\{(x_{j},\omega_{j}) \}_{j=1}^{N}$ is a  cubature / algebraic cubature of strength $L$.
\end{Theorem}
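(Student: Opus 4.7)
The plan is to adapt the Bondarenko--Radchenko--Viazovska topological strategy for $L$-designs \cite{BRV} to the setting of prefixed, possibly unequal, weights, following the blueprint of \cite{EMOC,GG}. The argument decomposes into three conceptual steps: (i) a weighted partition of $\X$ in which the $j$-th piece has mass exactly $\omega_j$; (ii) setting up a continuous map from a product of small tangent balls (one per piece) into $\X_L^0$ whose zeros are cubature configurations; (iii) a Brouwer degree / Poincar\'e--Miranda argument producing a zero, driven by two quantitative bounds that rest on Marcinkiewicz--Zygmund-type inequalities for elements of $\X_L$ and their gradients.

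First, invoking the weighted area partition proved in Section \ref{SecP1}, I would write $\X = \bigsqcup_{j=1}^N R_j$ with $\mu_\X(R_j) = \omega_j$ and $\diam(R_j) \lesssim (b/N)^{1/d}$. In each $R_j$ fix a center $x_j^\ast$ and a tangent ball $B_j \subset T_{x_j^\ast}\X$ of radius $r_j \simeq (a/N)^{1/d}$ such that $\exp_{x_j^\ast}$ is a diffeomorphism of $B_j$ onto a subset of $R_j$. For a configuration $v = (v_1,\ldots,v_N) \in \prod_j \overline{B_j}$ set $x_j(v) = \exp_{x_j^\ast}(v_j)$ and define $\Psi(v) \in \X_L^0$ by Riesz duality,
\[
\langle \Psi(v), P \rangle_{L^2(\X)} \;=\; \sum_{j=1}^N \omega_j\, P(x_j(v)) \qquad \text{for every } P \in \X_L^0.
\]
A cubature of strength $L$ with the prescribed weights $\{\omega_j\}$ is precisely a zero of $\Psi$.

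The existence of such a zero is obtained from a Brouwer degree argument on $\prod_j \overline{B_j}$. Two estimates drive it. At the origin, $\|\Psi(0)\|_{L^2}$ is small because $\{(x_j^\ast,\omega_j)\}$ is already an approximate quadrature on each piece: the error on a given $P \in \X_L^0$ is controlled by $\max_j \diam(R_j)\cdot \|\nabla P\|_\infty$, and a standard Nikolskii/Bernstein inequality for $\X_L$ provides the required bound in terms of $L$ and $N^{-1/d}$. On the boundary of $\prod_j \overline{B_j}$, the first-order Taylor expansion
\[
P(x_j(v)) \;=\; P(x_j^\ast) \;+\; \langle \nabla P(x_j^\ast),\, v_j\rangle \;+\; O\!\left(|v_j|^2 \, \|\mathrm{Hess}\, P\|_\infty\right)
\]
reduces non-degeneracy of $\Psi$ to the gradient Marcinkiewicz--Zygmund inequality
\[
\sum_{j=1}^N \omega_j\, |\nabla P(x_j^\ast)|^2 \;\gtrsim\; L^{2}\, \|P\|_{L^2(\X)}^2 \qquad (P \in \X_L^0),
\]
one of the main analytic inputs advertised in the introduction. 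Combining this lower bound with a Bernstein-type estimate $\|\mathrm{Hess}\, P\|_\infty \lesssim L^{2h}\|P\|_\infty$ (with $h=1$ for diffusion polynomials on $\M$ and $h=d$ for algebraic polynomials on $\V$), the nonlinear remainder is dominated by the linear principal part precisely when $N \gtrsim b(b/a)^{2h}L^d$. Under that hypothesis, $\Psi$ points ``outward'' on the boundary, has nonzero local degree at the origin, and hence must vanish in the interior.

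The main obstacle is the quantitative gradient Marcinkiewicz--Zygmund inequality with the explicit dependence on the weights: the lower bound above must survive the fact that each $\omega_j$ can vary across a factor $b/a$, while the relevant geometric scales $r_j$ and $\diam(R_j)$ must stay compatible with $(a/N)^{1/d}$ and $(b/N)^{1/d}$ respectively. Tracking these factors together with the Bernstein bound on the Hessian (which is where the exponent $h$ enters and accounts for the difference between the Riemannian and the algebraic case) is what produces the required lower bound $N \geq Cb(b/a)^{2h}L^d$. Once this analytic input is in place, the topological conclusion is a direct adaptation of the degree computation in \cite{BRV,EMOC,GG}.
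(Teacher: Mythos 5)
Your plan shares the broad architecture of the paper (weighted area partition, Riesz representation in $\X_L^0$, Brouwer degree), but there are two gaps that prevent it from working as written.

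The first and most serious is a dimension mismatch in the degree argument. You propose to run Brouwer degree (or Poincar\'e--Miranda) for the map $\Psi : \prod_{j=1}^N \overline{B_j} \to \X_L^0$. The domain has dimension $Nd$, while the target space $\X_L^0$ has dimension comparable to $L^d$, and the hypothesis $N \gtrsim b(b/a)^{2h}L^d$ forces $Nd \gg \dim\X_L^0$. Brouwer degree (and Poincar\'e--Miranda) are statements about maps between spaces of \emph{equal} dimension, so they do not apply to $\Psi$ directly, and you give no mechanism (a projection, a restriction to a $\dim\X_L^0$-dimensional slice, or a composition that returns to the domain) to repair this. The paper avoids the issue by going in the other direction: it takes the bounded open set $\Omega = \{P \in \X_L^0 : \int_\X \|\nabla P\|\,d\mu_\X < 1\}$ in the \emph{polynomial} space, builds a configuration-valued map $F : \X_L^0 \to \X^N$ by flowing each $x_j$ along the normalized vector field $\nabla P / v_\varepsilon(\|\nabla P\|)$ for a time $\sim b^{1/d}N^{-1/d}$, and then composes with $U : \X^N \to \X_L^0$, $U(x_1,\ldots,x_N) = \sum_j \omega_j G_{x_j}$, to get a self-map $f = U \circ F : \X_L^0 \to \X_L^0$. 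Lemma \ref{brouwer} is then applied to $f$ on $\Omega$, with $\langle P, f(P)\rangle > 0$ for $P \in \partial\Omega$ verified via the gradient-flow estimate.

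The second gap is the form of the Marcinkiewicz--Zygmund input. You invoke a quantitative lower bound $\sum_{j} \omega_j |\nabla P(x_j^\ast)|^2 \gtrsim L^2 \|P\|_{L^2}^2$ for all $P \in \X_L^0$. This inequality is false as stated: taking $P = \varphi_1$ (the first nonconstant eigenfunction, which lies in $\X_L^0$ as soon as $L \geq \lambda_1$) gives $\|\nabla P\|_{L^2}^2 = \lambda_1^2\|P\|_{L^2}^2$, which is bounded independently of $L$, so no sampling lower bound can improve this to $L^2\|P\|_{L^2}^2$. What the paper actually proves (Propositions \ref{MZM} and \ref{th:MZ 2}) is a \emph{two-sided} $L^1$-type estimate
\[
\Big|\int_\X \|\nabla P\| \, d\mu_\X - \sum_{j=1}^N \omega_j \|\nabla P(x_j)\|\Big| \leq \frac{1}{2}\int_\X \|\nabla P\| \, d\mu_\X,
\]
which compares the sampled sum to $\int \|\nabla P\|$ rather than to $L\|P\|_{L^2}$, and this is what makes the gradient-flow derivative bound $\frac{d}{dt}\sum_j \omega_j P(y_j(P,t)) \geq \tfrac12 - \varepsilon$ go through. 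Relatedly, your attribution of the exponent $h$ to a Bernstein bound $\|\operatorname{Hess} P\|_\infty \lesssim L^{2h}\|P\|_\infty$ is not how $h$ enters the paper: in the Riemannian case the factor $(b/a)^2$ comes from a volume count of overlapping inner balls $Y_j$ near a point, and in the algebraic case the factor $(b/a)^{2d}$ comes from the pointwise bound $\|\nabla P(x')\| \lesssim L^{2d+1}\int_{B_\Y(x',L^{-1})}|P|\,d\mu_\Y$ on the complexification $\Y$ together with the subsequent volume count. So even the heuristic accounting of where the $(b/a)^{2h}$ comes from would need to be revised if the rest of your approach were repaired.
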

The proof of Theorem \ref{mainM} is presented in the subsequent section. Here, we prove that the lower bound on the weights can be removed.

\begin{Cor} \label{maincorM}
Let $b\geq 1$. If
\[
N\geq C b^{2h+2}L^{d}
\]
and if the weights $\{\omega_{j} \}_{j=1}^{N}$ are such that 
\[
\sum_{j=1}^{N} \omega_{j}=1 \ \textit{ and } \ 0\leq \omega_{j} \leq \frac{b}{N},
\]
then there exists $\{x_{j} \}_{j=1}^{N}\subset \X$ such that $\{(x_{j},\omega_{j}) \}_{j=1}^{N}$ is a cubature / algebraic cubature of strenght $L$.
\end{Cor}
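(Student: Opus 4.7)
The plan is to deduce the corollary from Theorem \ref{mainM} by aggregating the potentially small weights into bigger groups and then letting several nodes coincide. The quantitative balance dictating the strategy is that, upon picking $a=b^{-1/(2h)}\in(0,1]$ (which lies in $(0,1]$ because $b\ge 1$), the ratio controlling Theorem \ref{mainM} becomes
\[
b\left(\frac{b}{a}\right)^{2h}=b\cdot b^{2h+1}=b^{2h+2},
\]
so the quantitative threshold provided by Theorem \ref{mainM} matches the one claimed in the corollary, up to a fixed multiplicative constant.

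I first lift all weights above the threshold $a/N$. Split the indices as $J_L=\{j:\omega_j\ge a/N\}$ and $J_S=\{j:\omega_j<a/N\}$, and greedily partition $J_S$ into groups $B_1,\dots,B_M$ by adding indices one at a time and closing the current group as soon as its accumulated weight first reaches $a/N$. Since every summand is strictly below $a/N$, each closed group has total weight in $[a/N,2a/N)$. If the last group ends up with weight below $a/N$, merge it into $B_{M-1}$; if instead $M=1$ and this sole group is undersized, absorb it into a single index of $J_L$ (which is nonempty since $\sum_j\omega_j=1$). Every resulting group has aggregated weight in $[a/N,3b/N]$. Set $N'=|J_L|+M$ and define
\[
\tilde\omega_i=\begin{cases}\omega_i,& i\in J_L,\\ \sum_{j\in B_k}\omega_j,& i \text{ indexes the group } B_k,\end{cases}
\]
which satisfy $\sum_{i=1}^{N'}\tilde\omega_i=1$ and $a/N\le\tilde\omega_i\le 3b/N$.

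Next, I apply Theorem \ref{mainM} to $(\tilde\omega_i)_{i=1}^{N'}$ with parameters $a'=aN'/N$ and $b'=3bN'/N$, so that $\tilde\omega_i\in[a'/N',b'/N']$. The hypothesis $a'\le 1\le b'$ is automatic: since $\sum_i\tilde\omega_i=1$, the arithmetic mean $1/N'$ is trapped between the extreme weights. The requirement $N'\ge C_{\X}\,b'(b'/a')^{2h}L^{d}$ reduces, after the factor $N'$ cancels, to the absolute inequality $N\ge C_{\X}\cdot 3b\,(3b/a)^{2h}L^{d}$, which by the choice $a=b^{-1/(2h)}$ reads $N\gtrsim b^{2h+2}L^{d}$; this is exactly the hypothesis of the corollary provided the constant $C$ is large enough. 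Theorem \ref{mainM} therefore supplies points $y_1,\dots,y_{N'}\in\X$ with $\sum_{i=1}^{N'}\tilde\omega_iP(y_i)=\int_{\X}P\,d\mu_{\X}$ for every $P\in\X_L$.

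Finally, I unpack the aggregation by setting $x_j=y_j$ for $j\in J_L$ and $x_j=y_{i_k}$ for every $j$ lying in the group $B_k$ associated with the node $y_{i_k}$. Regrouping gives
\[
\sum_{j=1}^{N}\omega_jP(x_j)=\sum_{i=1}^{N'}\tilde\omega_iP(y_i)=\int_{\X}P\,d\mu_{\X}
\]
for every $P\in\X_L$, the desired cubature. The only genuinely delicate step is the treatment of the possibly undersized terminal group, resolved by the merging argument above; an incidental feature of the construction is that several $x_j$'s may coincide, which is compatible with Definitions \ref{def:1} and \ref{def:2}.
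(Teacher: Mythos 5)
Your proof is correct and follows essentially the same strategy as the paper's: aggregate the small weights into groups whose mass is bounded below by a fixed multiple of $1/N$, apply Theorem \ref{mainM} to the aggregated weights, and then obtain the full cubature by repeating each aggregated node for all the indices in its group. The paper sorts all the weights and greedily groups them (small and large alike) into blocks of mass at least $1/N$, whereas you only aggregate the weights below $a/N$ with $a=b^{-1/(2h)}$ and keep the larger ones as singletons; this is a cosmetic difference, and in fact taking $a=1$ in your scheme would give the same (slightly better) exponent $2h+1$ that the paper's grouping yields, but both are subsumed by the corollary's $b^{2h+2}$ hypothesis.
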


\begin{proof}
Assume all weights are in increasing order, $\omega_1\leq\omega_2\leq\ldots\leq\omega_N$. 
Let us organize the set of weights in blocks with total mass at least $1/N$. 
Thus let $j_1$ be such that $\sum_{j=1}^{j_1-1}\omega_j< 1/N$ but  $W_1=\sum_{j=1}^{j_1}\omega_j\geq 1/N$. 
Let $j_2$ be such that $\sum_{j=j_1+1}^{j_2-1}\omega_j< 1/N$ but  $W_2=\sum_{j=j_1+1}^{j_2}\omega_j\geq 1/N$, and so on, up until $j_m=N$ in such a way that $\sum_{j=j_{m-1}+1}^{N-1}\omega_j< 1/N$ but  $W_m=\sum_{j=j_{m-1}+1}^{N}\omega_j\geq 1/N$. 
Notice that the construction ends correctly since $\omega_N\geq1/N$. 
By construction, for all $i=1,\ldots,m$, 
\begin{align*}
\frac{1}{N}\leq W_i\leq\frac{b+1}{N},\qquad 1=\sum_{j=1}^{N}\omega_j=\sum_{i=1}^{m}W_i\leq m\frac {b+1}{N},
\end{align*}
so that $m\geq N/{(b+1)}\gtrsim (b+1)^{2h+1}L^d$.
We can therefore apply Theorem \ref{mainM} to the weights $\{W_i\}_{i=1}^{m}$ and we conclude that there are points $\{x_i\}_{i=1}^m$ such that $\{(x_i,W_i)\}_{i=1}^m$ is a cubature of strength $L$. 
By repeating the point $x_i$ for all the weights $\omega_j$ with $j_{i-1}+1\leq j \leq j_i$
we obtain the desired cubature $\{(x_i,\omega_i)\}_{i=1}^{N}$.
\end{proof}

\section{Proof of the Main result} 
\label{SecIn}
As in \cite{BRV, EMOC, GG}, the proof is based on a result from the Brouwer degree theory.

\begin{lemma}\label{brouwer}
\cite[Theorem 1.2.9]{brouwer} Let $H$ be a finite dimensional Hilbert space with inner product $\langle\cdot,\cdot\rangle$. Let $f:H\rightarrow H$
be a continuous mapping and $\Omega$ an open bounded subset with boundary
$\partial\Omega$ such that $0\in\Omega\subset H$. If $\left\langle
x,f\left(  x\right)  \right\rangle >0$ for all $x\in\partial\Omega$, then
there exists $x\in\Omega$ satisfying $f\left(  x\right)  =0.$
\end{lemma}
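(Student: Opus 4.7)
The plan is to prove the lemma by Brouwer topological degree theory, identifying the finite dimensional Hilbert space $H$ with some $\R^n$. The standard Brouwer degree $\deg(g,\Omega,0)$ is defined for continuous maps $g\colon\overline{\Omega}\to H$ that do not vanish on $\partial\Omega$; it is invariant under admissible homotopies and, when nonzero, forces the existence of a zero of $g$ in $\Omega$. My strategy is to construct an admissible homotopy between $f$ and the identity map $\mathrm{id}_H$, whose degree at the point $0\in\Omega$ is known to equal $1$.

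First I would introduce the straight-line homotopy
\[
F(t,x)=(1-t)x+tf(x),\qquad (t,x)\in[0,1]\times\overline{\Omega},
\]
which is continuous jointly in $(t,x)$ since $f$ is continuous. The crucial step is to verify that $F(t,x)\neq 0$ for every $(t,x)\in[0,1]\times\partial\Omega$. At $t=0$ this is immediate because $F(0,x)=x$ and $0\in\Omega$ forces $x\neq 0$ on $\partial\Omega$. For $t\in(0,1]$, if $F(t,x)=0$ at some $x\in\partial\Omega$, then $f(x)=-\tfrac{1-t}{t}\,x$, whence
\[
\langle x,f(x)\rangle=-\tfrac{1-t}{t}\|x\|^{2}\le 0,
\]
which contradicts the assumption $\langle x,f(x)\rangle>0$ on $\partial\Omega$ (at $t=1$ the same contradiction reads $\langle x,f(x)\rangle=0$).

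Once admissibility is established, homotopy invariance of the Brouwer degree gives
\[
\deg(f,\Omega,0)=\deg(\mathrm{id}_{H},\Omega,0)=1,
\]
where the last equality follows from the normalization axiom and the fact that $0\in\Omega$. Since the degree is nonzero, the solvability property of the degree yields a point $x\in\Omega$ with $f(x)=0$, which is the conclusion.

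The only genuine step is the admissibility check on $\partial\Omega$, and it is precisely where the sign hypothesis $\langle x,f(x)\rangle>0$ enters in an essential way; everything else is a packaged consequence of the classical degree axioms (continuity, normalization, homotopy invariance, existence). If one prefers to avoid invoking degree theory as a black box, an equivalent route is to argue by contradiction: assuming $f$ has no zero in $\overline{\Omega}$, the map $x\mapsto -r\,f(x)/\|f(x)\|$ (for $r>0$ with $\overline{B_{r}}\subset\Omega$) would provide, via composition with a retraction of $\overline{\Omega}$ onto $\overline{B_{r}}$, a continuous self-map of $\overline{B_{r}}$ whose fixed point supplied by Brouwer's theorem would violate the inner-product condition; this is the same argument repackaged without naming the degree.
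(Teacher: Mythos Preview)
Your argument is correct: the straight-line homotopy $F(t,x)=(1-t)x+tf(x)$ is admissible on $\partial\Omega$ by exactly the inner-product computation you give, and then homotopy invariance together with the normalization $\deg(\mathrm{id}_H,\Omega,0)=1$ (which uses $0\in\Omega$) yields a zero of $f$ in $\Omega$. This is the standard degree-theoretic proof.

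There is nothing to compare on the paper's side: the paper does not prove this lemma but simply quotes it as \cite[Theorem~1.2.9]{brouwer}. Your proof is essentially the argument one finds in that reference. One minor caveat concerns your closing aside: the ``retraction of $\overline{\Omega}$ onto $\overline{B_r}$'' you invoke need not exist for an arbitrary bounded open set $\Omega$, so that alternative route would require more care (e.g.\ first reduce to a ball by a different device, or simply stick with the degree argument, which is clean and complete as you wrote it).
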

The following result is the main tool to define a mapping with the properties stated in Lemma \ref{brouwer}.
\begin{lemma}\label{prop_4}
Let $h=1$ if $\X=\M$ and $h=d$ if $\X=\V$. There exists a constant $C=C(\X)>0$ such that the following holds: for all $0<a\leq 1 \leq b$, for all $N\ge Cb(\frac{b}{a})^{2h} L^d$ and for all weights $\{\omega_{j} \}_{j=1}^{N}$ such that 
\[
\sum_{j=1}^{N} \omega_{j}=1 \ \textit{ and } \ \frac{a}{N}\leq \omega_{j} \leq \frac{b}{N},
\]
there exists a continuous mapping
\begin{align*}
F :\X_{L}^{0} &\rightarrow \X^N \\
P & \mapsto (x_{1}(P),\ldots,x_{N}(P)),
\end{align*}
such that for all $P \in \X^0_{L}$ with $\int_{\X} \|\nabla P(x)\|d\mu_{\X}(x)=1$,
\begin{equation*}
 \sum_{j=1}^{N} 
\omega_j P(x_{j}(P))>0.
\end{equation*}
\end{lemma}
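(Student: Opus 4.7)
The plan is to construct $F$ by perturbing reference points of a prescribed-mass partition along the direction of $\nabla P$, and to verify the positivity of $\sum_j \omega_j P(x_j(P))$ using a Marcinkiewicz--Zygmund--type inequality for the gradient, combined with the cancellation $\int_\X P\,d\mu_\X = 0$ that comes for free from $P\in\X_L^0$.

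\emph{Step 1 (Weighted partition).} Invoking the weighted area partition of Section~\ref{SecP1}, I decompose $\X = \bigsqcup_{j=1}^N R_j$ with $\mu_\X(R_j) = \omega_j$ and $\diam(R_j) \lesssim \omega_j^{1/d} \leq (b/N)^{1/d}$, and fix a reference point $y_j \in R_j$ for each $j$.

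\emph{Step 2 (Definition of $F$).} For a small parameter $\epsilon > 0$ to be chosen, set
\[
x_j(P) := \exp_{y_j}\bigl(\epsilon\, \nabla P(y_j)\bigr),\qquad F(P) := (x_1(P),\ldots,x_N(P)).
\]
Since $P \mapsto \nabla P(y_j)$ is linear on the finite-dimensional space $\X_L^0$ and $\exp$ is smooth, $F$ is continuous. Geodesic Taylor expansion along the short geodesic $t\mapsto \exp_{y_j}(t\nabla P(y_j))$ yields
\[
\sum_{j=1}^N \omega_j P(x_j(P)) \;=\; A(P) \;+\; \epsilon \sum_{j=1}^N \omega_j \|\nabla P(y_j)\|^2 \;+\; E(P),
\]
where $A(P) := \sum_j \omega_j P(y_j) - \int_\X P\,d\mu_\X$ is a first-order discretisation error, and $E(P)$ is a second-order remainder controlled by the Hessian of $P$.

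\emph{Step 3 (Error control via gradient MZ).} Writing $A(P) = \sum_j \int_{R_j}(P(y_j) - P(x))\,d\mu_\X(x)$ and applying a fundamental-theorem-of-calculus bound on each cell, together with a gradient Marcinkiewicz--Zygmund inequality to pass from $\sup_{R_j}\|\nabla P\|$ to $\omega_j^{-1}\int_{R_j}\|\nabla P\|\,d\mu_\X$, one gets $|A(P)| \lesssim (b/N)^{1/d}\int_\X\|\nabla P\|\,d\mu_\X = (b/N)^{1/d}$. Cauchy--Schwarz and the same MZ inequality give
\[
\sum_j \omega_j \|\nabla P(y_j)\|^2 \;\geq\; \Bigl(\sum_j \omega_j \|\nabla P(y_j)\|\Bigr)^2 \;\gtrsim\; \Bigl(\int_\X\|\nabla P\|\,d\mu_\X\Bigr)^2 = 1.
\]
A Bernstein-type inequality bounding $\|\operatorname{Hess} P\|$ by $L^h$ times $\|\nabla P\|$, followed by a further application of the gradient MZ inequality, yields $|E(P)| \lesssim \epsilon^2 L^h$.

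\emph{Step 4 (Balancing and conclusion).} Choosing $\epsilon\sim (b/N)^{1/d}$ with a sufficiently large implicit constant, the main term $\epsilon\sum_j\omega_j\|\nabla P(y_j)\|^2\gtrsim \epsilon$ dominates both $|A(P)|$ and $|E(P)|$ precisely under the hypothesis $N \geq C b (b/a)^{2h} L^d$, the $(b/a)^{2h}$ factor absorbing the polynomial dependence of the MZ constants on the ratio of cell sizes. This delivers $\sum_j \omega_j P(x_j(P)) > 0$ as required.

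\emph{Main obstacle.} The nontrivial technical input is the gradient Marcinkiewicz--Zygmund inequality on a partition whose cell masses range over the interval $[a/N, b/N]$; establishing it in the required form---simultaneously for diffusion polynomials on $\M$ and for algebraic polynomials on $\V$---and tracking the precise dependence of its constants on $b/a$ is where the parameter $h$ and the $(b/a)^{2h}$ factor in the hypothesis genuinely enter. This is the step I expect to require the most work and is likely handled in dedicated later sections of the paper.
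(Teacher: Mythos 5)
Your Steps 1 and 3 correctly identify the two technical ingredients the paper builds (the weighted area partition of Proposition~\ref{partition} and the gradient Marcinkiewicz--Zygmund inequalities of Propositions~\ref{MZM} and~\ref{th:MZ 2}), and the overall strategy of moving reference points in the direction of $\nabla P$ and balancing a discretisation error against a main term is the right intuition. However, the specific construction of $F$ in Step 2 has a genuine gap that the paper's argument is designed precisely to avoid.

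You set $x_j(P)=\exp_{y_j}(\epsilon\,\nabla P(y_j))$ with the \emph{un-normalised} gradient. The hypothesis on $P$ is only $\int_{\X}\|\nabla P\|\,d\mu_{\X}=1$, which gives no control on $\|\nabla P(y_j)\|$ pointwise: the displacement $\epsilon\|\nabla P(y_j)\|$ can be arbitrarily large, so the geodesic Taylor expansion is not valid on the scale used, and the second-order remainder $E(P)$ picks up a factor $\epsilon^2\|\nabla P(y_j)\|^2\sup\|\operatorname{Hess}P\|$ along a geodesic of uncontrolled length. The Bernstein-type inequality you invoke, a pointwise bound $\|\operatorname{Hess}P\|\lesssim L^{h}\|\nabla P\|$, is false in general: at a nondegenerate critical point of $P$ the right side vanishes while the left does not. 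Replacing it by a legitimate Bernstein--Nikolskii chain $\|\operatorname{Hess}P\|_{\infty}\lesssim L^{2}\|P\|_{\infty}\lesssim L^{2+d}\|P\|_{L^{1}}\lesssim L^{2+d}\int\|\nabla P\|$ and balancing $\epsilon\sim(b/N)^{1/d}$ forces a much stronger hypothesis on $N$ than $N\gtrsim b(b/a)^{2h}L^{d}$, so the count does not close.

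The paper (following \cite{BRV}) resolves exactly this obstruction. Instead of a one-step Euler perturbation, it evolves each $x_j$ along the ODE $\dot y_j=U(P)(y_j)$, $y_j(P,0)=x_j$, where $U(P)=\nabla P/v_\varepsilon(\|\nabla P\|)$ is the gradient \emph{clamped to speed at most $1$}, and stops at time $T=12c_4b^{1/d}N^{-1/d}$. Speed $\le 1$ guarantees $|y_j(P,t)-x_j|\le t\le T$, so at every $t$ the moved points still lie inside an enlarged admissible partition ($c_4\mapsto 13c_4$) and the gradient MZ inequality can be applied \emph{at each time slice} to give $\frac{d}{dt}\sum_j\omega_jP(y_j(P,t))\ge\sum_j\omega_j\|\nabla P(y_j(P,t))\|-\epsilon\ge\frac12-\epsilon$. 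Integrating in $t$ and subtracting the initial discretisation error $\bigl|\sum_j\omega_jP(x_j)\bigr|\le 3c_4b^{1/d}N^{-1/d}$ yields positivity \emph{without ever touching the Hessian of $P$}; continuity of $F$ follows from standard continuous dependence of ODE solutions on parameters. The moral is that the normalisation of the vector field (so points move at bounded speed) is not a cosmetic device but the step that makes the error analysis second-derivative-free; your proposal drops it, and that is where the proof breaks.
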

We postpone the proof of Lemma \ref{prop_4} and now verify our Main result.

\begin{proof}[Proof of Theorem \ref{mainM}]
Fix $L$ and define
\begin{align}\label{Omega}
\Omega =\left\lbrace P \in \X_{L}^{0} : \int_{\X} \| \nabla P(x)\| d\mu_{\X}(x) 
< 
1 \right\rbrace,
\end{align}
which is clearly an open subset of $\X^0_{L}$ such that $0 \in  \Omega \subset \X_{L}^{0}$.
Since $\int_{\X} \| \nabla P(x)\| d\mu_{\mathbb X} (x)$ is a norm in the finite dimensional space $\X_{L}^{0}$, it is equivalent to the $L^2$ norm in $\X_{L}^{0}$, so $\Omega$ is also bounded in $\X_{L}^{0} \subset L^2 (\X)$.
Take $C = C(\mathcal{\X})$ as in Lemma \ref{prop_4}, let $N\ge Cb(\frac{b}{a})^{2h}L^d$ and let $x_i(P)$ be the points defined by the map $F$ in Lemma \ref{prop_4} for $P\in \partial \Omega$.

By the Riesz Representation Theorem, for each point $x\in \X$ there exists a unique polynomial $G_x \in \X_{L}^{0}$ such that
\begin{equation*}
\left\langle
G_x , P
\right\rangle
=
P(x)
\end{equation*}
for all $P \in \X_{L}^{0}$. Then a set of points $\{ x_j \}_{j=1}^{N} \subset \X$ together with a set of weights $\{ \omega_i \}_{j=1}^{N} \subset \mathbb{R}_+$ is a cubature formula of strength $L$ if and only if 
\begin{equation*}
\sum_{i=1}^{N} 
\omega_{j}
G_{x_{j}}
=
0.
\end{equation*} 
Now let $U: \X^N \rightarrow \X_{L}^{0}$ be the continuous map defined by $U(x_1 , ... ,x_N) = \sum_{i=1}^{N} \omega_{j} G_{x_{j}}$ and let us consider the application
\begin{equation*}
f = U \circ F: \X_{L}^{0} \rightarrow \X_{L}^{0}.
\end{equation*}
Then, by Lemma \ref{prop_4}, for every $P \in \partial\Omega$ we have 
\begin{equation*}
\left\langle
P, f(P)
\right\rangle
=
\sum_{j=1}^{N} 
\omega_{j}
P( x_{j}(P))
>
0.
\end{equation*}
We conclude with Lemma \ref{brouwer}, stating that there exists $Q \in \Omega$ such that $U(F(Q)) = 0$, that is, such that $\sum_{j=1}^{N} \omega_{j} G_{x_{j}(Q)} = 0$, which implies that  $\{(x_{j}(Q),\omega_{j}) \}_{j=1}^{N}$ is a cubature formula of strength $L$. 
\end{proof} 

In order to complete the above proof, we must verify Lemma \ref{prop_4}. 
We define the application $F$ through a gradient flow with initial points that are taken from a partition of the manifold into regions with areas corresponding to the weights. 
To verify suitable properties of the flow, Marcinkiewicz-Zygmund inequalities for the gradient of diffusion polynomials and algebraic polynomials are required. 
These are the topics of the subsequent sections. We start with weighted area partitions, where the result holds for both scenarios and then we prove Marcinkiewicz-Zygmund inequalities separatelly for polynomials and diffusion polynomials. 

\section{Weighted area partitions}\label{SecP1}
Here we generalize the results in \cite{GL} for the case of equal weight partitions to the case of not all equal weights.
\begin{definition}\label{def:partition}
Let $0<a\leq1\leq b$ and $0<c_3<c_4$. We say that a collection of subsets of $\X$, $\mathcal{R}=\{R_{j} \}_{j=1}^{N}$ is a partition of $\X$ with constants $a$, $b$, $c_3$ and $c_4$
if the following hold:
\begin{itemize}
\item $\cup_{j=1}^N R_j=\X$ and $\mu_{\X}(R_i\cap R_j)=0$ for all $1\le i<j\le N$,
\item $a/N\leq\mu_{\X}(R_{j})\leq b/N$ for $j=1, \cdots N$, 
\item each $R_{j}$ is contained in a geodesic ball $X_{j}$ of radius $ c_4 b^{1/d}N^{-1/d} $ and contains a geodesic ball $Y_{j}$ of radius $c_3\left(a^{2}/b \right)^{1/d}N^{-1/d}$.
\end{itemize}
\end{definition}

\begin{prop} \label{partition}
There exist two constants $0<c_3<c_4$ such that for all constants $a$ and $b$ with $0<a\leq 1\leq b$, for every $N \geq1$ and for every choice of weights $\{\omega_{j} \}_{j=1}^{N}$ with $\sum_{j=1}^{N} \omega_{j}=1$ and $a/N \leq \omega_{j} \leq b/N$,  
there is a partition of $\X$, $\mathcal{R}=\{R_{j} \}_{j=1}^{N}$, with constants $a$, $b$, $c_3$ and $c_4$ such that $\mu_{\X}(R_{j})=\omega_j$ for all $j=1,\ldots,N$.

\end{prop}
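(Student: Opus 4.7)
My plan is to derive the weighted partition by refining the equal-weight partition of \cite{GL} and then reassembling its cells into clusters matching the prescribed masses. In the degenerate case $a=b=1$ the statement is exactly the theorem of \cite{GL}.

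The first step is to apply \cite{GL} to produce a partition of $\X$ into $M$ cells of equal mass $1/M$, where $M$ is chosen of order $bN/a$. Each such cell is both contained in and contains a geodesic ball of radius of order $M^{-1/d}\sim(a/(bN))^{1/d}$. Since $a/N\le\omega_j\le b/N$, the number $M\omega_j$ of fine cells needed to assemble $R_j$ is of order between $b$ and $b^2/a$, and in particular is always at least one.

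The second step is to cluster the fine cells into the regions $R_j$ by a greedy ball-growing procedure: for each $j$, pick a seed cell from a pre-chosen maximally separated family of seeds, then iteratively adjoin the closest unused cell (with adjacency defined by overlap of closures) until the accumulated mass matches $\omega_j$ up to one cell's worth, and split a single final boundary cell along a level set of a coordinate function to hit $\omega_j$ exactly. This yields a connected $R_j$ of geodesic diameter at most a constant times $(b^2/a)^{1/d}(a/(bN))^{1/d}=(b/N)^{1/d}$, providing the outer geodesic ball of radius $c_4 b^{1/d}N^{-1/d}$. The inner ball is given by the seed cell itself, whose inscribed ball of radius of order $(a/(bN))^{1/d}$ exceeds the required $(a^2/(bN))^{1/d}=a^{1/d}(a/(bN))^{1/d}$ because $a\le 1$.

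The main obstacle is ensuring that the greedy clustering remains both feasible (no $R_j$ runs out of adjacent unused cells) and shape-controlled (each $R_j$ stays roughly round rather than elongating into thin filaments that would break the inner-ball bound). Feasibility is handled by drawing the seeds from a maximally separated net in $\X$ of spacing of order $(b/N)^{1/d}$, so that different regions grow within disjoint neighbourhoods large enough to accommodate the at most $\sim b^2/a$ cells each may demand. Shape control relies on the adjacency graph of the \cite{GL} partition having bounded vertex degree, and on a ball of $k$ cells around a seed having geodesic diameter of order $k^{1/d}M^{-1/d}$; both properties are consequences of the construction in \cite{GL} together with the Riemannian geometric estimates recorded in Appendix \ref{SecM}.
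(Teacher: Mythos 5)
The paper's proof runs quite differently from what you propose: it works directly with a Christ-type system of $\delta$-adic cubes on $\X$, chooses a coarse generation $k$ (cubes of mass $\geq 2b/N$) and a finer generation $l$ (cubes of mass $\leq \frac{a}{CN}$), builds a spanning tree on the adjacency graph of the generation-$k$ cubes, and allocates weights by traversing this tree from leaves to root, using Lemma~\ref{lemma:gg}/Corollary~\ref{cor:010} to extend small cubes to prescribed masses while passing an explicit ``remainder'' set up the tree at each node. The tree-and-remainder mechanism is precisely what guarantees feasibility: a node can always satisfy its allocated weights because the remainders from its children are recycled into the available mass.

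Your greedy seed-and-grow scheme has a genuine gap exactly at this feasibility step. You assert that feasibility follows from taking seeds in a maximal net of spacing $(b/N)^{1/d}$. But a maximal $(b/N)^{1/d}$-separated set in $\X$ (of total measure $1$, with the doubling estimate \eqref{eq:doubling}) contains only of order $N/b$ points, whereas you need $N$ seeds, one per region. If you shrink the spacing to $(a/N)^{1/d}$ so that $\sim N/a \geq N$ seeds exist, the ``disjoint neighbourhoods of $\sim b^2/a$ cells'' picture fails, since a region of the largest permitted mass $b/N$ has geodesic extent $\sim (b/N)^{1/d}$ and will necessarily invade its neighbours' growth zones. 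This is not a cosmetic issue: the whole point of the problem is that the region sizes vary over a factor of $b/a$, so they cannot be placed on a spatially uniform grid, and once cells get assigned to one greedy cluster there is no mechanism preventing another cluster from running out of adjacent unused cells. Similarly, your shape-control claim (a cluster of $k$ cells has diameter $\sim k^{1/d}M^{-1/d}$) is true for a metric ball of $k$ cells, but the greedy process only guarantees a connected cluster, which a priori could elongate when forced to route around already-used cells; you would need an argument that the growth stays round. The dyadic-cube/spanning-tree bookkeeping in the paper's proof is what replaces both of these missing arguments, by confining each $R_j$ inside a single generation-$k$ cube (plus remainders, which are themselves confined to an adjacent cube) and by tracking unallocated mass explicitly.

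Your size estimates for the fine cells are fine (a cluster needs between $\sim b$ and $\sim b^2/a$ cells, the inner ball of the seed cell beats the required radius since $a\le 1$, and the outer ball bound is of the right order), and the idea of splitting a final boundary cell via a level set to hit $\omega_j$ exactly is a legitimate substitute for the paper's appeal to Lemma~\ref{lemma:gg}. But without a feasibility and shape-control argument that survives the $b/a$ heterogeneity of the weights, the core of the proof is missing.
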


The proof of Proposition \ref{partition} is based on the following lemma on non-atomic measures not having gaps in their range:
\begin{lemma}\label{lemma:gg}
\cite[Corollary 3]{GL}
Let $S$ be a measurable subset of $\X$. Then, for any $0\leq r\leq \mu_{\X}(S)$, there is $\Gamma\subset S$ such that $\mu_{\X}(\Gamma)=r$. 
\end{lemma}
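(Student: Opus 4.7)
The statement is the classical Sierpi\'nski property that a non-atomic finite measure has no gaps in its range; my plan is to realize it concretely as an intermediate value argument for a continuous one-parameter family of subsets of $S$, built from distance balls to a fixed base point.

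Fix any base point $x_0\in\X$ and, for $t\ge 0$, set
\begin{equation*}
\Gamma_t := S\cap B(x_0,t),\qquad f(t):=\mu_{\X}(\Gamma_t),
\end{equation*}
where $B(x_0,t)$ is the open geodesic ball of radius $t$. Then $f$ is nondecreasing with $f(0)=0$ and $f(t)=\mu_{\X}(S)$ for $t\ge\diam(\X)$, so once $f$ is shown to be continuous, the intermediate value theorem supplies some $t^{*}\in[0,\diam(\X)]$ with $f(t^{*})=r$, and $\Gamma:=\Gamma_{t^{*}}$ is the required subset.

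The only substantive step is continuity of $f$. For $0\le s<t$ monotonicity gives
\begin{equation*}
f(t)-f(s)\le \mu_{\X}\bigl(\{x\in\X : s\le d(x,x_0)<t\}\bigr),
\end{equation*}
so right- and left-continuity reduce to showing that each geodesic sphere $\Sigma_t:=\{x\in\X:d(x,x_0)=t\}$ satisfies $\mu_{\X}(\Sigma_t)=0$ for every $t\ge 0$. This is the main obstacle, though a mild one: the function $d(\cdot,x_0)$ is $1$-Lipschitz on $\X$, hence each $\Sigma_t$ is a level set of a Lipschitz function on a $d$-dimensional manifold and therefore has Hausdorff dimension at most $d-1$; since in every Riemannian chart $\mu_{\X}$ is absolutely continuous with respect to $d$-dimensional Hausdorff measure, we conclude $\mu_{\X}(\Sigma_t)=0$. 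This reasoning applies to $\M$ directly and to $\V$ through its induced Riemannian structure, so $f$ is continuous in both cases, and the intermediate value theorem produces the desired $\Gamma$.

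If one prefers to avoid any appeal to Hausdorff dimension, an elementary fallback is to note that $\{t>0:\mu_{\X}(\Sigma_t)>0\}$ is at most countable (the $\Sigma_t$ are pairwise disjoint and $\mu_{\X}$ is finite), so $f$ already has at most countably many jumps; at any such jump $t_0$ the missing intermediate mass can be extracted from $\Sigma_{t_0}$ by working in finitely many local Riemannian charts, in which $\mu_{\X}$ is mutually absolutely continuous with Lebesgue measure and the analogous statement for $\R^d$ is routine.
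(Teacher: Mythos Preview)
The paper does not prove this lemma; it is quoted verbatim as \cite[Corollary~3]{GL} and used as a black box. So there is no proof in the paper to compare against, and your argument must stand on its own.

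Your overall strategy --- sweep out $S$ by the increasing family $\Gamma_t=S\cap B(x_0,t)$ and apply the intermediate value theorem --- is sound. The gap is the sentence ``each $\Sigma_t$ is a level set of a Lipschitz function on a $d$-dimensional manifold and therefore has Hausdorff dimension at most $d-1$.'' That implication is false: any closed set $K\subset\X$ is the zero level set of the $1$-Lipschitz function $x\mapsto d(x,K)$, and $K$ can have full Hausdorff dimension $d$ and positive $\mu_{\X}$-measure. Lipschitz regularity alone says nothing about the size of an individual level set. The conclusion $\mu_{\X}(\Sigma_t)=0$ is nevertheless true on a compact Riemannian manifold, but for a different reason: $d(\cdot,x_0)$ is smooth with $\|\nabla d(\cdot,x_0)\|\equiv 1$ on $\X\setminus(\{x_0\}\cup\mathrm{Cut}(x_0))$, so $\Sigma_t$ is a smooth hypersurface there, and the cut locus has $\mu_{\X}$-measure zero. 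Replace your Lipschitz sentence with this and the main route goes through for both $\mathcal M$ and $\mathcal V$.

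Your fallback paragraph is essentially correct and is in fact the classical Sierpi\'nski argument you name at the outset: once you pass to finitely many charts in which $\mu_{\X}$ is mutually absolutely continuous with Lebesgue measure on $\R^d$, the no-gaps property reduces to the elementary Euclidean case. Note, though, that this chart argument already proves the lemma outright --- no geodesic balls or spheres needed --- so if you lean on it, the first half of your proof becomes decorative.
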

Note that this lemma holds for more general spaces $\X$ than the ones we consider in the present manuscript, see \cite{GL} for a brief discussion. 

\begin{Cor}\label{cor:010}
Given positive weights $\{\omega_j\}_{j=1}^N\subset\mathbb{R}$, let $S$ and $Q_1,\ldots,Q_N\subset S$ be measurable subsets of $\X$. If $\{Q_j\}_{j=1}^N$ are pairwise disjoint with $\mu_{\X}(Q_j)\leq \omega_j$ and $\mu_{\X}(S)\geq \sum_{j=1}^N \omega_j$, then there are pairwise disjoint $R_1,\ldots,R_N\subset S$, such that $Q_j\subset R_j$ and $\mu_{\X}(R_j)=\omega_j$, $j=1,\ldots,N$.
\end{Cor}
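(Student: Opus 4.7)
\medskip

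The natural strategy is to absorb each $Q_j$ into a slightly larger set $R_j$ by appending a piece from the ``leftover'' region $T := S \setminus \bigcup_{j=1}^N Q_j$. Since the $Q_j$ are pairwise disjoint and contained in $S$, we have
\[
\mu_{\X}(T) = \mu_{\X}(S) - \sum_{j=1}^N \mu_{\X}(Q_j) \geq \sum_{j=1}^N \omega_j - \sum_{j=1}^N \mu_{\X}(Q_j) = \sum_{j=1}^N \bigl(\omega_j - \mu_{\X}(Q_j)\bigr),
\]
where each summand on the right is nonnegative by hypothesis. Thus $T$ has enough total measure to supply the required ``deficits'' $\omega_j - \mu_{\X}(Q_j)$.

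The plan is then a straightforward sequential construction using Lemma \ref{lemma:gg}. First I would set $T_0 := T$ and iteratively, for $j = 1, \ldots, N$, apply Lemma \ref{lemma:gg} to $T_{j-1}$ with target measure $r_j := \omega_j - \mu_{\X}(Q_j)$, producing $\Gamma_j \subset T_{j-1}$ with $\mu_{\X}(\Gamma_j) = r_j$; then set $T_j := T_{j-1} \setminus \Gamma_j$. To invoke the lemma at step $j$ I must verify $r_j \leq \mu_{\X}(T_{j-1})$, which follows from
\[
\mu_{\X}(T_{j-1}) = \mu_{\X}(T) - \sum_{i<j} r_i \geq \sum_{i \geq j} r_i \geq r_j.
\]
Finally, define $R_j := Q_j \cup \Gamma_j$. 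Pairwise disjointness of $\{R_j\}$ follows because the $Q_j$ are pairwise disjoint, each $\Gamma_j \subset T$ is disjoint from every $Q_i$, and the $\Gamma_j$ are disjoint from one another by construction. The measure identity $\mu_{\X}(R_j) = \mu_{\X}(Q_j) + \mu_{\X}(\Gamma_j) = \omega_j$ is immediate, and $R_j \subset Q_j \cup T \subset S$.

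There is no real obstacle here beyond the bookkeeping above: the content is entirely in Lemma \ref{lemma:gg}, which guarantees the measure-space is ``gap-free'' in its range of subset measures. Once the deficit sum is bounded by $\mu_{\X}(T)$, the greedy peeling-off procedure works at every step.
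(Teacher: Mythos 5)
Your proof is correct and follows essentially the same greedy sequential construction as the paper: start from the leftover region $S\setminus\bigcup_j Q_j$, peel off a set of measure $\omega_j-\mu_{\X}(Q_j)$ at each step via Lemma \ref{lemma:gg}, and adjoin it to $Q_j$. You are slightly more careful than the paper in explicitly checking the measure bound $\mu_{\X}(T_{j-1})\geq r_j$ at every stage, which the paper leaves implicit under ``and so on.''
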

\begin{proof}[Proof of Corollary \ref{cor:010}]
We start with $S_1:=S\setminus \bigcup_{j=1}^N Q_j$. Since $\mu_{\X}(S_1)\geq \omega_1-\mu_{\X}(Q_1)$, there is $\Gamma_1\subset S_1$ such that $\mu_{\X}(\Gamma_1)=\omega_1-\mu_{\X}(Q_1)$. We set $R_1:=Q_1\cup \Gamma_1$. Next, we define $S_2:=S_1\setminus R_1$. There is $\Gamma_2\subset S_2$ such that $\mu_{\X}(\Gamma_2)=\omega_2-\mu_{\X}(Q_2)$. Let $R_2:=Q_2\cup \Gamma_2$ and so on.  
\end{proof}
Our assumptions on $\X$ imply that there are constants $0<c_1\leq c_2<\infty$ such that 
\begin{equation}\label{eq:doubling}
c_1 r^d \leq \mu_{\X}(B(x,r)) \leq c_2 r^d,\quad \text{ for all } x\in\X,\; 0<r\leq \diam(\X),
\end{equation}
where $B(x,r)$ denotes the ball of radius $r$ centered at $x$. 
The proof of Proposition \ref{partition} proceeds as in the equal weight case in \cite{GL}, with a few technical
modifications. As in \cite{GL}, we know that there is a family of $\delta$-adic cubes in $\X$, i.e., for any $0<\delta<1$ there exist $0<u_1\leq u_2<\infty$, a collection of open subsets $\{Q^k_\alpha: k\in\mathbb{Z},\; \alpha\in I_k\}$ in $\X$, where each $I_k$ is a finite index set, and points $\{z^k_\alpha: k\in\mathbb{Z},\; \alpha\in I_k\}$ with  
\begin{itemize}
\item[i)] $\mu_{\X}(\X\setminus \bigcup_{\alpha\in I_k} Q^k_\alpha)=0$, for all $k\in\mathbb{Z}$,
\item[ii)] for $l>k$ and $\alpha\in I_l$, there is $\beta_0\in I_k$ such that 
\begin{itemize}
\item[$\bullet$] $Q^l_\alpha\subset Q^k_{\beta_0}$,
\item[$\bullet$] $Q^l_\alpha\cap Q^k_\beta=\emptyset$, for all $\beta\in I_k$ with $\beta\neq \beta_0$
\end{itemize}
\item[iii)] $B(z^k_\alpha,u_1\delta^k)\subset Q^k_\alpha\subset B(z^k_\alpha,u_2\delta^k)$, for all $k\in\mathbb{Z}$, $ \alpha\in I_k$.
\end{itemize}
Assume first
\begin{equation}\label{eq:cond N}
N\geq\frac{2b}{c_1\delta^d\diam(\X)^d}.
\end{equation}
Choose $k\in\mathbb{Z}$ such that 
\begin{equation}\label{eq:est k etc}
u_1\delta^{k+1}< \left( \frac{2}{c_1}\frac{b}{N}\right)^{1/d}\leq u_1\delta^k,
\end{equation}
so that we obtain the estimates
\begin{equation}\label{eq:est 34}
\mu_{\X}(Q^k_\alpha) \geq \mu_{\X}(B(z^k_\alpha, u_1\delta^k)) \geq c_1u_1^d\delta^{kd}\geq 2\frac{b}{N}.
\end{equation}
Here, we have used \eqref{eq:doubling}, so that we still need to ensure $u_1\delta^k\leq \diam(\X)$. Indeed, using \eqref{eq:cond N} we derive
\begin{align*}
u_1\delta^k & = \frac{u_1\delta^{k+1}}{\delta} \leq \frac{(\frac{2}{c_1}\frac{b}{N})^{1/d}}{\delta}\leq \diam(\X).
\end{align*} 
Thus, \eqref{eq:est 34} is a valid estimate. 

Similarly, we derive an upper bound
\begin{align*}
\mu_{\X}(Q^k_\alpha) \leq \mu_{\X}(B(z^k_\alpha, u_2\delta^k)) &\leq c_2u^d_2\delta^{kd} < \frac{c_2}{c_1}\Big(\frac{u_2}{u_1}\Big)^d\frac{2}{\delta^{d}}  \frac{b}{N} =\frac{c_2}{c_1}\Big(\frac{u_2}{u_1}\Big)^d\frac{2}{\delta^{d}}  \frac{b}{a}\frac{a}{N}.
\end{align*}
With $C:=\frac{c_2}{c_1}\Big(\frac{u_2}{u_1}\Big)^d\frac{2}{\delta^d}3^d  \frac{b}{a}$, we have checked
\begin{equation}\label{eq:vol k}
2\frac{b}{N}\leq \mu_{\X}(Q^k_\alpha) \leq \frac{C}{3^d} \frac{a}{N}.
\end{equation}

For the cube generation $k$, we now build a graph with vertices $I_k$. For $\alpha,\beta\in I_k$, we put an edge $(\alpha,\beta)$ if and only if $B(z^k_\alpha,u_1\delta^k)\cap B(z^k_\beta,u_1\delta^k)\neq \emptyset$. 
This graph is connected, see \cite[Proof of Theorem 2]{GL}, so that we can extract a spanning tree with leaf nodes, intermediate nodes, and one root node. 
We create the directed tree $\mathcal{T}$ by directing the edges from the root towards the leaves, so that $(\alpha,\beta)\in\mathcal{T}$ is the directed edge between $\alpha$ and its child $\beta$. 

The triangular inequality yields 
\begin{equation*}
Q^k_\alpha\cup \!\!\!\bigcup_{(\alpha,\beta)\in \mathcal{T}}Q^k_\beta \;\;\;\subset\;\;B(z^k_\alpha,3u_2\delta^k),
\end{equation*}
cf.~\cite[Corollary 2]{GL}. Hence, we obtain the volume estimate
\begin{equation}\label{eq:trian}
\mu_{\X}(Q^k_\alpha\cup \bigcup_{(\alpha,\beta)\in \mathcal{T}}Q^k_\beta) \leq \mu_{\X}(B(z^k_\alpha,3u_2\delta^k))\leq c_2(3u_2\delta^k)^d\leq C\frac{a}{N}.
\end{equation}
We now aim to take a younger generation of $\delta$-adic cubes, say $l=k+m$, such that all cubes of
generation $l$ have measure smaller than $\frac{1}{C}\frac{a}{N}$. Indeed, 
let $m$ be the positive integer such that 
\begin{equation}\label{eq:def m}
\delta^{m}\leq 3 C^{-2/d}<\delta^{m-1}.
\end{equation}
Notice that $3 C^{-2/d}<1$, so that this choice is possible. 
Thus, for all $\alpha\in I_{l}$, we get from \eqref{eq:def m} and \eqref{eq:est k etc}
\begin{align*}
\mu_{\X}(  Q_{\alpha}^{l})  \leq\mu_{\X}(  B(z^l_\alpha, u_{2}\delta^{l}))  &\leq (  c_{2}u_{2}^{d}\delta
^{kd})  \delta^{md} \\
& \leq (  c_{2}u_{2}^{d}\delta
^{kd})3^d C^{-2}\\
&\leq \frac{Ca}{3^dN} 3^dC^{-2} \leq \frac{1}{C}\frac{a}{N}.
\end{align*}
We now construct the partition by running through the directed tree $\mathcal{T}$ and using the above estimates, which are overkill for the leaves but are more appropriate for the remaining nodes. Let us denote the weights by $\Omega:=\{\omega_j\}_{j=1}^N$.

\subsubsection*{Leaves}
Start with a leaf node $\alpha\in I_k$. Take a maximal set of weights from $\Omega$ such that their sum is not bigger than $\mu_{\X}(Q^k_\alpha)$. Denote this maximal set with $\Omega_\alpha$ and its cardinality with $N_\alpha$. Each cube of generation $l$ has measure at most $\frac{1}{C}\frac{a}{N}$, so that the volume of $N_\alpha$ cubes of generation $l$ is bounded by
\begin{align*}
N_\alpha\frac{1}{C}\frac{a}{N} \leq \frac{1}{C}\mu_{\X}(Q^k_\alpha) &\leq \frac{1}{C}\mu_{\X}\left(Q^k_\alpha\cup \bigcup_{(\alpha,\beta)\in T}Q^k_\beta\right)
\leq \frac{a}{N},
\end{align*}
where we have used \eqref{eq:trian}. According to \eqref{eq:vol k}, $Q^k_\alpha$ has sufficient volume that we can choose $N_\alpha$ cubes of generation $l$ inside of $Q^k_\alpha$. Let us denote them by $Q^l_{\beta_1},\ldots,Q^l_{\beta_{N_\alpha}}$. By Corollary \ref{cor:010}, we enlarge each of such cubes within $Q^k_\alpha$, so that their measure matches the weights in $\Omega_\alpha$, so that we obtain $\{R_{\beta_i}\}_{i=1}^{N_\alpha}$. The remainder in $Q^k_\alpha$, i.e., $W_\alpha:=Q^k_\alpha\setminus \bigcup_{i=1}^{N_\alpha} R_{\beta_i}$ has volume less than $b/N$, because we took the maximal number of weights. 

We repeat the above steps for each leaf node but only allow weights in $\Omega$ that have not been chosen previously. After having finished all leaves, we have remainders $W_\alpha\subset Q^k_\alpha$, for each $\alpha\in I_k$ that corresponds to a leaf. 

\subsubsection*{Intermediate nodes}
For each $\alpha\in I_k$ that is neither a leaf nor the root, start with $X_{\alpha}=Q_{\alpha}^{k}\cup\bigcup_{(  \alpha
,\beta)  \in \mathcal{T}}W_{\beta}$, that is we add all the remainders coming
from the children of $\alpha$. Note that we can proceed with the intermediate nodes in an ordering such that the remainders $W_{\beta}$ with $(  \alpha
,\beta)$ have indeed all been already computed. Note also that we can assume $W_{\beta}\subset Q_{\beta}^{k}$, for all $(
\alpha,\beta)  \in \mathcal{T}$ (take this as an induction hypothesis. It is clearly true if $\beta$ is a leaf node, and will follow at the end of this paragraph for the intermediate nodes). Now
repeat the same argument as before with $X_{\alpha}$ in place of $Q^k_\alpha$. 
Take a maximal set of the remaining weights from $\Omega$ such that their sum is not bigger than $\mu_{\X}(X^k_\alpha)$. Again, denote this maximal set with $\Omega_\alpha$ and its cardinality with $N_\alpha$. 
As we saw before, the entire volume of $N_\alpha$ cubes of generation $l$ is at most $a/N$, so that they can be chosen within $Q_{\alpha}^{k}$. Let us denote these cubes by $Q^l_{\beta_1},\ldots,Q^l_{\beta_{N_\alpha}}$. The volume of $Q_{\alpha}^{k}\setminus \big(\bigcup_{i=1}^{N_\alpha} Q^l_{\beta_i}  \big)$ is still at least $b/N$. According to Lemma \ref{lemma:gg}, there is $W_\alpha\subset Q^k_\alpha\setminus \big(\bigcup_{i=1}^{N_\alpha} Q^l_{\beta_i}  \big)$ with volume 
\begin{equation*}
\mu_{\X}(W_\alpha)=\mu_{\X}(X_\alpha)-\sum_{\omega\in\Omega_\alpha}\omega<b/N.
\end{equation*}
By Corollary \ref{cor:010}, we extend the cubes $Q^l_{\beta_1},\ldots,Q^l_{\beta_{N_\alpha}}$ within $X_\alpha\setminus W_\alpha$, so that the volumes match the weights in $\Omega_\alpha$, yielding subsets $\{R_{\beta_i}\}_{i=1}^{N_\alpha}$. By comparing volumes, the union of the extensions now covers the neighboring remainders $W_\beta$ (at least up to a set of measure zero), and the new remainder $W_\alpha$ is indeed contained in $Q^k_\alpha$.  

We proceed with the remaining weights for each of the intermediate nodes in a suitable order. 

\subsubsection*{Root}
We do the same as for intermediate nodes but comparing volumes yields that the remainder of the root node must have measure zero. 

\bigskip
After having treated each node in $\mathcal{T}$, we have collected a partition $\{R_j\}_{j=1}^N$, so that we obtain, with a suitable reordering, $\mu_{\X}(R_j)=\omega_j$, for $j=1,\ldots,N$. 

Since each $R_j$ contains a cube of generation $l$, it contains a ball of radius $u_1\delta^l$. A short calculation yields $\delta^l \gtrsim \big(\frac{a^2}{b}\big)^{1/d} N^{-1/d}$. On the other hand, each $R_j$ is contained in a ball of radius $3u_2\delta^k\lesssim b^{1/d}N^{-1/d}$.

Assume now that 
\[
1\leq N\leq \frac{2b}{c_1\delta^d\diam(\X)^d}.
\]
Let $k$ be now the unique integer such that 
\[
c_2(u_2\delta^{k})^d\leq \frac aN <c_2(u_2\delta^{k-1})^d.
\]
This implies that all the cubes of generation $k$ have measure smaller than all the values $\omega_j$. Then take any $N$ distinct cubes of generation $k$ and extend them by means of Corollary \ref{cor:010} to disjoint sets $\{R_j\}_{j=1}^N$ with measures $\omega_j$, respectively. Each $R_j$ contains its corresponding cube of generation $k$ and therefore a ball with radius 
\[
u_1\delta^k>\frac {u_1\delta}{u_2c_2^{1/d}}\frac{a^{1/d}}{N^{1/d}}.
\]
On the other hand, every $R_j$ is trivially contained in a (any) ball with radius 
\[
\diam{\X}\le\frac{2^{1/d}}{c_1^{1/d}\delta}\frac{b^{1/d}}{N^{1/d}}.
\]
This concludes the proof of Proposition \ref{partition}.

\begin{remark}
Proposition \ref{partition} and its proof hold for any complete, connected metric measure spaces that satisfy \eqref{eq:doubling}, cf.~\cite{GL} for further details. 
\end{remark}

\section{Marcinkiewicz-Zygmund inequalities}\label{Sec6}
\subsection{MZ-inequalities on Riemannian manifolds}\label{SSec61}
The Marcinkiewicz-Zygmund inequality for diffusion polynomials on manifolds has been proved by Maggioni, Mhaskar and Filbir throughout the papers \cite{FM2010, FM2011, MM}. 
For our proof we need the Marcinkiewicz-Zygmund inequality for the gradient of diffusion polynomials. 
Note that when $\mathcal{M}$ is the $d$-dimensional sphere, then the gradient of a polynomial is again a polynomial. 
In the case of a general Riemannian manifold, this fails (see \cite{GG}). Here we prove a Marcinkiewicz-Zygmund inequality for gradients of diffusion polynomials in the case of a Riemannian manifold and with prefixed weights.
Throughout this section, let $\M$, $\mu_{\M}$, $\M_{L}$ and $\M_{L}^{0}$ be as defined in Section \ref{sec:M}.

\begin{prop} \label{MZM}
Let $\mathcal{M}$ be as in Section \ref{sec:M} and
let $0<c_3<c_4$.
Then, there exists a constant $C=C(\M,c_3,c_4)\geq1$ such that for all $0<a\leq1\leq b$, for all integers $N\ge C b(\frac{b}{a})^{2} L^d$, for all partitions $\{ R_{j}\}_{j=1}^{N}$ with constants $a$, $b$, $c_3$ and $c_4$ as in Definition \ref{def:partition},  for all $x_{j} \in R_{j}$, for all $P \in \M_{L}^{0}$ it holds
\begin{align}\label{eq:1/2 estimate}
\Big|\int_{\M} \|\nabla P(x)\| d\mu_{\M}(x)  -  \sum_{j=1}^N \omega_j \|\nabla P(x_j)\| \Big| \leq \frac{1}{2}\int_{\M} \|\nabla P(x)\| d\mu_{\M}(x),
\end{align}
where $\omega_j=\mu_{\M}(R_j)$, for all $j=1,\ldots,N$.
\end{prop}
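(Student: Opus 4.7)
The plan is to control the error cell-by-cell, exploiting that $\|\nabla P\|$ is Lipschitz with a pointwise gradient bounded by $\|\nabla^{2}P\|$ and that each cell $R_j$ has small diameter, then to reduce to an $L^{1}$ average of $\|\nabla P\|$ via localized Bernstein and Nikolskii-type inequalities for diffusion polynomials.

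First I would bound, with $\omega_j=\mu_{\M}(R_j)$,
$$\Bigl|\int_{\M}\|\nabla P(x)\|\,d\mu_{\M}(x)-\sum_{j=1}^{N}\omega_j\|\nabla P(x_j)\|\Bigr|\;\leq\;\sum_{j=1}^{N}\int_{R_j}\bigl|\|\nabla P(x)\|-\|\nabla P(x_j)\|\bigr|\,d\mu_{\M}(x).$$
Since $\|\nabla\|\nabla P\|\|\leq\|\nabla^{2}P\|$ pointwise, the mean value theorem along the minimizing geodesic from $x_j$ to $x$ (which lies in a controlled enlargement $X_j^{\ast}$ of the circumscribed ball $X_j=B(z_j,c_4 b^{1/d}N^{-1/d})$) yields
$$\bigl|\|\nabla P(x)\|-\|\nabla P(x_j)\|\bigr|\;\leq\; d(x,x_j)\sup_{y\in X_j^{\ast}}\|\nabla^{2}P(y)\|\;\leq\; 2c_4 b^{1/d}N^{-1/d}\sup_{X_j^{\ast}}\|\nabla^{2}P\|.$$

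Next I would invoke a local Bernstein inequality for diffusion polynomials to replace $\|\nabla^{2}P\|$ on $X_j^{\ast}$ by $L\cdot\|\nabla P\|$ on a slightly larger ball $X_j^{\ast\ast}$, followed by a local Nikolskii-type inequality converting this supremum into a local $L^{1}$ average,
$$\sup_{X_j^{\ast\ast}}\|\nabla P\|\;\lesssim\;\frac{1}{\mu_{\M}(Y_j)}\int_{X_j^{\ast\ast\ast}}\|\nabla P\|\,d\mu_{\M}.$$
Both inequalities require balls of radius $\lesssim 1/L$, which is ensured by the hypothesis $N\gtrsim bL^{d}$ since then $c_4 b^{1/d}N^{-1/d}\lesssim 1/L$; the inscribed ball $Y_j\subset R_j$ has measure $\mu_{\M}(Y_j)\gtrsim(a^{2}/b)/N$ by Definition \ref{def:partition}.

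Combining the estimates with $\mu_{\M}(R_j)\leq b/N$, so that $\mu_{\M}(R_j)/\mu_{\M}(Y_j)\lesssim(b/a)^{2}$, and using bounded multiplicity of the covering $\{X_j^{\ast\ast\ast}\}$ (guaranteed by the disjointness and comparable radii of the inscribed balls $Y_j$), I would obtain
$$\Bigl|\int_{\M}\|\nabla P\|\,d\mu_{\M}-\sum_{j=1}^{N}\omega_j\|\nabla P(x_j)\|\Bigr|\;\lesssim\;\Bigl(\frac{b}{a}\Bigr)^{2}b^{1/d}N^{-1/d}L\int_{\M}\|\nabla P(x)\|\,d\mu_{\M}(x).$$
Taking $C$ in $N\geq Cb(b/a)^{2}L^{d}$ sufficiently large then brings the coefficient below $\tfrac{1}{2}$, yielding \eqref{eq:1/2 estimate}.

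The main obstacle I anticipate is establishing the localized Bernstein and Nikolskii inequalities for $\|\nabla P\|$, which is not itself a diffusion polynomial; one must either exploit $\|\nabla P\|^{2}\in\M_{2L}$ or work component-wise in local orthonormal frames where the coordinates of $\nabla P$ admit diffusion-polynomial-type bounds. A secondary delicate point is keeping the multiplicity of the enlarged covering $\{X_j^{\ast\ast\ast}\}$ bounded uniformly in $a,b,L,N$, so that the dependence on $b/a$ from the summation step does not inflate beyond the $(b/a)^{2}$ that is already paid in the volume ratio $\mu_{\M}(R_j)/\mu_{\M}(Y_j)$.
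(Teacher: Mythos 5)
Your plan is a genuinely different route from the paper's, but it has a gap at the exact point you flag as the ``main obstacle,'' and that gap does not close in the way you suggest. The paper does not use local Bernstein or Nikolskii inequalities for $\|\nabla P\|$ at all. Instead it works with the reproducing kernel $\Psi_L=\Delta_y W_L(x,y)$ for $\M_L^0$, applies Green's formula to write $P(x)=\int_\M\langle\nabla_y P(y),\nabla_y W_L(x,y)\rangle\,d\mu_\M(y)$, regularizes $\|\nabla P\|$ by $TP=\|\nabla P\|^2/v_\varepsilon(\|\nabla P\|)$ and iterates this with a second operator $S$, and then invokes the pointwise decay estimate (borrowed from \cite{GG}) $\|\nabla_y S_xT_xW_L(x,y)\|\lesssim L^{d+1}(1+L|x-y|)^{-d-1}$. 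This convolution-type bound plays the role that your Bernstein--Nikolskii step would play, but it requires no polynomial structure on $\|\nabla P\|$ or $\|\nabla^2 P\|$; it is purely a kernel estimate.

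Your proposed workaround, exploiting ``$\|\nabla P\|^2\in\M_{2L}$,'' is false on a general Riemannian manifold: unlike on the sphere, Laplace--Beltrami eigenfunctions are not closed under products, so $\|\nabla P\|^2$ has no reason to lie in any $\M_{L'}$. The paper even highlights (at the start of Section~\ref{Sec6}) that the gradient of a diffusion polynomial is not a diffusion polynomial on a general $\M$, which is precisely why the reproducing-kernel detour is taken. The ``component-wise in local frames'' alternative has the same defect; the components of $\nabla P$ in a local orthonormal frame are not diffusion polynomials either. So as written, the two localized inequalities you need are not available, and they are the substance of the proof, not a routine lemma.

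Two further technical concerns, even if you granted yourself those localized inequalities. First, the multiplicity of your enlarged covering $\{X_j^{***}\}$ cannot be bounded independently of $a,b$: balls of radius $\sim c_4 b^{1/d}N^{-1/d}$ centred at the (disjoint, volume $\geq a/N$) cells can overlap $O(b/a)$ times, not $O(1)$, so another factor of $b/a$ must be paid. Second, your displayed bound $\left(\tfrac{b}{a}\right)^2 b^{1/d}LN^{-1/d}$, even taken at face value, would force $N\gtrsim (b/a)^{2d}bL^d$, which is weaker than the statement's $N\geq Cb(b/a)^2L^d$ as soon as $d>1$. The paper avoids this by arranging the near/far decomposition so that the $(b/a)^2$ dependence enters only through a side condition $N\geq b^3a^{-2}L^d$, while the final coefficient $\tilde C b^{1/d}LN^{-1/d}$ has no $(b/a)$ factor at all. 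So even after repairing the Bernstein--Nikolskii gap, your bookkeeping would need to be reorganized to reach the claimed constant.
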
 
\begin{proof}


This proof follows the sketch of the proof of \cite[Theorem 5]{GG}. Fix $\varepsilon>0$ and let $v_{\varepsilon}:[0,+\infty]\to \mathbb{R}$ be a $\mathcal{C}^{\infty}$ function such that
\begin{equation}\label{eq:v}
v_{\varepsilon}(u)=\begin{cases}
u & \mathrm{if } \ u\geq \varepsilon\\
\varepsilon/2 & \mathrm{if } \ u\leq \varepsilon/4
\end{cases}
\end{equation}
and $v_{\varepsilon}(u)\geq u$ for all $u\geq 0$. Let $P \in \M_{L}^{0}$ and let $T$ and $S$ be the vector fields defined as
\[
T(x)=\frac{\nabla P(x)}{v_{\varepsilon}\left(\|\nabla P(x) \| \right)}, \ \quad \ S(x)= \frac{\nabla TP(x)}{v_{\varepsilon}\left(\|\nabla TP(x) \| \right)}.
\]
Therefore, by (\ref{grad})
\begin{align}
TP\left(  x\right) & =\left\langle \nabla P\left(  x\right)  ,\frac{\nabla
P\left(  x\right)  }{v_{\varepsilon}\left(  \left\Vert \nabla P\left(
x\right)  \right\Vert \right)  }\right\rangle\\
STP\left(  x\right) & =\left\langle \nabla TP\left(  x\right)  ,\frac{\nabla
TP\left(  x\right)  }{v_{\varepsilon}\left(  \left\Vert \nabla TP\left(
x\right)  \right\Vert \right)  }\right\rangle.
\label{eq:TP}
\end{align}
We define also for every $L\geq 0$ the kernel $W_{L}$ as
\begin{equation}\label{kernelW}
W_{L}(x,y)=\sum_{\lambda_{k}>0} \frac{1}{\lambda_{k}^{2}}H\left(\frac{\lambda_{k}}{L} \right)\varphi_{k}(x)\varphi_{k}(y),
\end{equation}
where $H$ is a $\mathcal{C}^{\infty}$ even function such that
\[
H(u)=\begin{cases}
1 & \mathrm{if } \ u\in[-1,1]\\
0 & \mathrm{if } \ |u|\geq 2.
\end{cases}
\]
Let  $\Psi_{L}\left(  x,y\right)  $ be a reproducing kernel for $\M_{L}^{0}$ defined as
\begin{equation}\label{repkernel}
\Psi_{L}\left(  x,y\right)     =\Delta_{y}W_{L}\left(  x,y\right)
=\sum_{0<\lambda_{k}}H\left(  \frac{\lambda_{k}}{L}\right)  \varphi
_{k}\left(  x\right)  \varphi_{k}\left(  y\right),
\end{equation}
where with the sub-index we put in evidence with respect to which variable each vector field operates.

It has been proved in \cite[proof of Theorem 5]{GG} that 
there exists a constant $\kappa>0$ such that
\begin{equation}
\label{estimate}
\|\nabla_{y}S_{x}T_{x}W_{L}(x,y) \|\leq \kappa L^{d+1}\left(1+L|x-y|\right)^{-d-1}.
\end{equation}

Notice that
\[
TP\left(  x\right)  =\left\langle \nabla P\left(  x\right)  ,\frac{\nabla
P\left(  x\right)  }{v_{\varepsilon}\left(  \left\Vert \nabla P\left(
x\right)  \right\Vert \right)  }\right\rangle =\frac{\left\Vert \nabla
P\left(  x\right)  \right\Vert ^{2}}{v_{\varepsilon}\left(  \left\Vert \nabla
P\left(  x\right)  \right\Vert \right)  }\leq\left\Vert \nabla P\left(
x\right)  \right\Vert
\]
and therefore
\begin{align*}
&  \left\vert \int_{\mathcal{M}}\left\Vert \nabla P\left(  x\right)
\right\Vert d\mu_{\M}\left(  x\right)  -\sum_{j=1}^{N}\omega_{j}\left\Vert \nabla
P\left(  x_{j}\right)  \right\Vert \right\vert \\
&  \leq\left\vert \int_{\mathcal{M}}\left(  \left\Vert \nabla P\left(
x\right)  \right\Vert -TP\left(  x\right)  \right)  d\mu_{\M}\left(  x\right)
\right\vert +\left\vert \int_{\mathcal{M}}TP\left(  x\right)  d\mu_{\M}\left(
x\right)  -\sum_{j=1}^{N}\omega_{j}TP\left(  x_{j}\right)  \right\vert \\
&  +\left\vert \sum_{j=1}^{N}\omega_{j}\left(  TP\left(  x_{j}\right)
-\left\Vert \nabla P\left(  x_{j}\right)  \right\Vert \right)  \right\vert \\
&  \leq2\varepsilon+\left\vert \int_{\mathcal{M}}TP\left(  x\right)
d\mu_{\M}\left(  x\right)  -\sum_{j=1}^{N}\omega_{j}TP\left(  x_{j}\right)
\right\vert .
\end{align*}
Let $\delta$ be the maximum diameter of the balls $X_{j}$ as in Definition \ref{def:partition}, so  $\delta\le 2c_4b^{1/d}N^{-1/d}.$ Hence, also by (\ref{geodesic}),
\begin{align*}
\left\vert \int_{\mathcal{M}}TP\left(  x\right)  d\mu_{\M}\left(  x\right)
-\sum_{j=1}^{N}\omega_{j}TP\left(  x_{j}\right)  \right\vert  &  \leq
\sum_{j=1}^{N}\int_{R_{j}}\left\vert TP\left(  x\right)  -TP\left(
x_{j}\right)  \right\vert d\mu_{\M}\left(  x\right)  \\
&  \leq\sum_{j=1}^{N}\omega_{j}\sup_{x,z\in R_{j}}\left\vert TP\left(
x\right)  -TP\left(  z\right)  \right\vert \\
& \leq \sum_{j=1}^{N}\omega_{j}\sup_{x,z\in R_j}\sup_{t\in [0,|x-z|]}\left\Vert \nabla
TP\left(  \alpha(t)\right)  \right\Vert |x-z|.
\end{align*}
where $\alpha$ is a normalized geodesic joining $x$ and $z$. 
Since $R_j$ is contained in the ball $X_j$, the geodesic $\alpha$ is contained in the
ball $2X_j$ with the same center as $X_j$ and radius twice the radius of $X_j$.
It follows that 
\[
\left\vert \int_{\mathcal{M}}TP\left(  x\right)  d\mu_{\M}\left(  x\right)
-\sum_{j=1}^{N}\omega_{j}TP\left(  x_{j}\right)  \right\vert  
\leq
\delta\sum_{j=1}^{N}\omega_{j}\sup_{x\in 2X_{j}}\left\Vert \nabla
TP\left(  x\right)  \right\Vert .
\]
From equation \eqref{eq:TP} one has that
\[
STP\left(  x\right)=\frac{\|\nabla TP(x)\|^2}{v_{\varepsilon}(\|\nabla TP(x)\|)} \leq\left\Vert \nabla TP\left(  x\right)  \right\Vert
\]
and therefore%
\begin{align*}
&\delta\sum_{j=1}^{N}\omega_{j}\sup_{x\in 2X_{j}}\left\Vert \nabla TP\left(
x\right)  \right\Vert \\
 &  \leq\delta\sum_{j=1}^{N}\omega_{j}\sup_{x\in 2X_{j}%
}\left\vert \left\Vert \nabla TP\left(  x\right)  \right\Vert -STP\left(
x\right)  \right\vert +\delta\sum_{j=1}^{N}\omega_{j}\sup_{x\in 2X_{j}%
}\left\vert STP\left(  x\right)  \right\vert \\
&  \leq\delta\varepsilon+\delta\sum_{j=1}^{N}\omega_{j}\sup_{x\in 2X_{j}%
}\left\vert STP\left(  x\right)  \right\vert .
\end{align*}
Hence we have obtained 
\[
\left\vert \int_{\mathcal{M}}\left\Vert \nabla P\left(  x\right)  \right\Vert
d\mu_{\M}\left(  x\right)  -\sum_{j=1}^{N}\omega_{j}\left\Vert \nabla P\left(
x_{j}\right)  \right\Vert \right\vert \leq\left(  2+\delta\right)
\varepsilon+\delta\sum_{j=1}^{N}\omega_{j}\sup_{x\in 2X_{j}}\left\vert
STP\left(  x\right)  \right\vert .
\]
We need to estimate the sum above. Notice that by Green's formula (\ref{Green}) we have
\begin{align*}
P\left(  x\right)   &  =\int_{\mathcal{M}}P\left(  y\right)  \Psi_{L}\left(
x,y\right)  d\mu_{\M}\left(  y\right)  =\int_{\mathcal{M}}P\left(  y\right)
\Delta_{y}W_{L}\left(  x,y\right)  d\mu_{\M}\left(  y\right)  \\
&  =\int_{\mathcal{M}}\left\langle \nabla_{y}P\left(  y\right)  ,\nabla
_{y}W_{L}\left(  x,y\right)  \right\rangle d\mu_{\M}\left(  y\right),
\end{align*}
where $W_L$ is defined in (\ref{kernelW}) and $\Psi_{L}$ in (\ref{repkernel}).
Therefore by \eqref{estimate} we have
\begin{align*}
&  \delta\sum_{j=1}^{N}\omega_{j}\sup_{x\in 2X_{j}}\left\vert STP\left(
x\right)  \right\vert \\
& =  \delta\sum_{j=1}^{N}\omega_{j}\sup_{x\in 2X_{j}}\left\vert \int%
_{\mathcal{M}}\left\langle \nabla_{y}P\left(  y\right)  ,\nabla_{y}S_{x}%
T_{x}W_{L}\left(  x,y\right)  \right\rangle d\mu_{\M}\left(  y\right)  \right\vert
\\
&  \leq\delta\sum_{j=1}^{N}\omega_{j}\sup_{x\in 2X_{j}}\int_{\mathcal{M}%
}\left\Vert \nabla_{y}P\left(  y\right)  \right\Vert \left\Vert \nabla
_{y}S_{x}T_{x}W_{L}\left(  x,y\right)  \right\Vert d\mu_{\M}\left(  y\right)  \\
&  \leq\kappa\delta\sum_{j=1}^{N}\omega_{j}\sup_{x\in 2X_{j}}\int%
_{\mathcal{M}}\left\Vert \nabla_{y}P\left(  y\right)  \right\Vert
L^{d+1}\left(  1+L\left\vert x-y\right\vert \right)  ^{-d-1}d\mu_{\M}\left(
y\right)  \\
&  \leq\kappa\delta\int_{\mathcal{M}}\left\Vert \nabla_{y}P\left(  y\right)
\right\Vert \left(  \sum_{j=1}^{N}L^{d+1}\omega_{j}\sup_{x\in 2X_{j}}\left(
1+L\left\vert x-y\right\vert \right)  ^{-d-1}\right)  d\mu_{\M}\left(  y\right)  .
\end{align*}
We reduce to estimate the sum in the integral. To do this, for any fixed $y$, let $J=\left\{  j:\mathrm{dist}\left(  2X_{j},y\right)
\geq2\delta\right\}  $ and $J^{\prime}$ its complement. We start considering $j \in J$. If we call
$q_{j}$ the point in $2X_{j}$ closest to $y$, and $p_{j}$ the
point in $2X_{j}$ farthest from $y,$ then 
\[
1+\frac{L}{2}\left\vert p_{j}-y\right\vert \leq1+\frac{L}{2}\left(  \left\vert
q_{j}-y\right\vert +2\delta\right)  \leq1+L\left\vert q_{j}-y\right\vert
\]
and therefore for the sum over $J$, also by (\ref{radiale}), we have
\begin{align*}
&  \sum_{j\in J}L^{d+1}\omega_{j}\sup_{x\in 2X_{j}}\left(  1+L\left\vert
x-y\right\vert \right)  ^{-d-1}\\
&  =\sum_{j\in J}L^{d+1}\omega_{j}\left(  1+L\left\vert q_{j}-y\right\vert
\right)  ^{-d-1}\leq\sum_{j\in J}L^{d+1}\omega_{j}\left(  1+\frac{L}%
{2}\left\vert p_{j}-y\right\vert \right)  ^{-d-1}\\
&  =\sum_{j\in J}\int_{R_{j}}L^{d+1}\left(  1+\frac{L}{2}\left\vert
p_{j}-y\right\vert \right)  ^{-d-1}d\mu_{\M}\left(  x\right)  \\
&  \leq\sum_{j\in J}\int_{R_{j}}L^{d+1}\left(  1+\frac{L}{2}\left\vert
x-y\right\vert \right)  ^{-d-1}d\mu_{\M}\left(  x\right)  \\
&  \leq\int_{\mathcal{M}  }L^{d+1}\left(
1+\frac{L}{2}\left\vert x-y\right\vert \right)  ^{-d-1}d\mu_{\M}\left(  x\right)
\\
&  \leq c_5 L^{d+1}\int_{0}^{+\infty}\left(  1+\frac{L}{2}s\right)
^{-d-1}s^{d-1}ds\\
&  \leq c_5 L^{d+1}\left(  \int%
_{0}^{1/L}s^{d-1}ds+\left(  \frac{2}{L}\right)  ^{d+1}\int%
_{1/L}^{+\infty}s^{-2}ds\right)  \\
&  \leq (d^{-1}+2^{d+1}) c_{5}L.
\end{align*}
Now we consider $J^{\prime}$. We have that its cardinality is bounded above by the number of
inner balls $Y_j$ that are contained in the ball $B(y, 4\delta)$, and this number is 
bounded above by the ratio
\[
\frac{\mu_{\M}(B(y,4\delta))}{\min_{j=1,\ldots, N}\mu_{\M}(Y_j)}\le\frac{8^d c_2 c_4^db^{2}}{c_1c_3^d a^{2}}.
\]
Thus, 
since $\frac{a}{N}\leq \omega_{j}\leq \frac{b}{N}$, and assuming $N\geq b^3a^{-2}L^d$,
\begin{align*}
\sum_{j\in J^{\prime}}L^{d+1}\omega_{j}\sup_{x\in 2X_{j}}\left(  1+L\left\vert
x-y\right\vert \right)  ^{-d-1}&\leq\sum_{j\in J^{\prime}}L^{d+1}\omega_{j}
\leq
\frac{8^dc_2c_4^d b^{3}}{c_1 c_3^da^{2}}\frac{L^{d+1}}N\leq
\frac{8^dc_2c_4^d}{c_1 c_3^d}L.
\end{align*}
We have obtained
\begin{align*}
&\left\vert \int_{\mathcal{M}}\left\Vert \nabla P\left(  x\right)  \right\Vert
d\mu_{\M}\left(  x\right)  -\sum_{j=1}^{N}\omega_{j}\left\Vert \nabla P\left(
x_{j}\right)  \right\Vert \right\vert \\
\leq&\left(  2+\delta\right)
\varepsilon+\kappa\left((d^{-1}+2^{d+1})c_5+\frac{8^dc_2c_4^d }{c_1 c_3^d}\right)\delta L\int_{\mathcal{M}}\left\Vert \nabla P\left(  y\right)
\right\Vert d\mu_{\M}\left(  y\right)  .
\end{align*}
If we take
\[
\varepsilon=\frac{\kappa\left((d^{-1}+2^{d+1})c_5+\dfrac{8^dc_2c_4^d }{c_1 c_3^d}\right)\delta L}{2+\delta}\int_{\mathcal{M}}\left\Vert \nabla P\left(
y\right)  \right\Vert d\mu_{\M}\left(  y\right),
\]
we obtain%
\[
\left\vert \int_{\mathcal{M}}\left\Vert \nabla P\left(  x\right)  \right\Vert
d\mu_{\M}\left(  x\right)  -\sum_{j=1}^{N}\omega_{j}\left\Vert \nabla P\left(
x_{j}\right)  \right\Vert \right\vert \leq \tilde{C}b^{1/d}LN^{-1/d}\int_{\mathcal{M}%
}\left\Vert \nabla P\left(  y\right)  \right\Vert d\mu_{\M}\left(  y\right),
\]
where
\begin{equation}\label{eq:CC}
\tilde{C}=4c_4\kappa\left((d^{-1}+2^{d+1})c_5+\frac{8^dc_2 c_4^d}{c_1c_3^d}\right).
\end{equation}
Assuming now $N\geq 2^d\tilde{C}^dbL^d$ one obtains
\eqref{eq:1/2 estimate}.  The Proposition now follows with 
$C=\max\{1,2^d\tilde{C}^d\}$.
\end{proof}

\subsection{MZ-inequalities on algebraic manifolds}
For an algebraic manifold with fixed weights, the Marcinkiewicz-Zygmund inequality does not follow from \cite{FM2010, FM2011, MM} as in Section \ref{SSec61} since we are not dealing with diffusion polynomials but with algebraic polynomials. 
Instead, one can apply arguments that use complexification of the variety $\mathcal{V}$ as in \cite{EMOC}. 
Throughout this section, let $\V$, $\mu_{\V}$, $\V_{L}$ and $\V_{L}^{0}$ be as defined in Section \ref{sec:alg and ellipse}.
The following proposition states the Marcinkiewicz-Zygmund inequality for algebraic polynomials and their gradients.

\begin{prop} \label{th:MZ 2}
Let $\V$ be as in Section \ref{sec:alg and ellipse} and
let $0<c_3<c_4$.
Then, there exists a constant $C=C(\V,c_3,c_4)\geq1$ such that for all $0<a\leq1\leq b$, for all integers $N\ge C b(\frac{b}{a})^{2d} L^d$, for all partitions $\{ R_{j}\}_{j=1}^{N}$ with constants $a$, $b$, $c_3$ and $c_4$ as in Definition \ref{def:partition},  for all $x_{j} \in R_{j}$, for all $P \in \V_{L}^{0}$ it holds
\begin{align}
\Big|\int_{\V} |P(x)| d\mu_{\V}(x)  -  \sum_{j=1}^N \omega_j |P(x_j)| \Big| \leq \frac{1}{2}\int_{\V} |P(x)| d\mu_{\V}(x),\label{eq:th 14}\\
\Big|\int_{\V} |\nabla P(x)| d\mu_{\V}(x)  -  \sum_{j=1}^N \omega_j |\nabla P(x_j)| \Big| \leq \frac{1}{2}\int_{\V} |\nabla P(x)| d\mu_{\V}(x),\label{eq:th 14 b}
\end{align}
where $\omega_j=\mu_{\V}(R_j)$ for all $j=1,\ldots,N$.
\end{prop}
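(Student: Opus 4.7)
The plan is to transplant the architecture of the proof of Proposition \ref{MZM} to the algebraic setting, substituting the diffusion-polynomial reproducing kernel $\Psi_L$ (built from Laplace--Beltrami eigenfunctions) by a suitable algebraic kernel for $\V_L$ obtained via the complexification machinery of \cite{EMOC}. Both bounds \eqref{eq:th 14} and \eqref{eq:th 14 b} follow the same blueprint and I would treat them in parallel: the gradient inequality \eqref{eq:th 14 b} is the genuine analogue of Proposition \ref{MZM} and requires the full two-vector-field setup, while the scalar inequality \eqref{eq:th 14} is a simpler companion where a single smooth vector field suffices. Writing them together has the advantage that the partition geometry and the kernel estimates are shared between the two arguments.

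For \eqref{eq:th 14 b}, I would fix $\varepsilon>0$, take the smooth truncation $v_\varepsilon$ of the absolute value from \eqref{eq:v}, and introduce the vector fields
\[
T(x)=\frac{\nabla P(x)}{v_\varepsilon(\|\nabla P(x)\|)},\qquad S(x)=\frac{\nabla TP(x)}{v_\varepsilon(\|\nabla TP(x)\|)},
\]
exactly as in Proposition \ref{MZM}, which yields $TP\le \|\nabla P\|$ and $STP\le \|\nabla TP\|$. Telescoping $|\int_\V \|\nabla P\|\,d\mu_\V-\sum_j\omega_j\|\nabla P(x_j)\||$ through these inequalities produces an $O(\varepsilon)$-error, a cubature error for $TP$ (which, by the geodesic mean-value estimate used in the Riemannian case, is bounded by $\delta\sum_j\omega_j\sup_{2X_j}\|\nabla TP\|$ with $\delta\lesssim b^{1/d}N^{-1/d}$ from Definition \ref{def:partition}), and a $\delta$-weighted sum of $\sup_{2X_j}|STP|$. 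For \eqref{eq:th 14}, I would replace the pair $(T,S)$ by the single field $T(x)=P(x)/v_\varepsilon(|P(x)|)$ with $|TP|\le |P|$, and the same argument collapses to one level without any Green identity.

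The new analytic input is the algebraic kernel. I would construct a reproducing kernel $K_L\in \V_L\otimes\V_L$ for $\V_L^0$ and a companion kernel $W_L$ on $\V$ satisfying the Green-type identity $P(x)=\int_\V\langle\nabla_yP(y),\nabla_yW_L(x,y)\rangle\,d\mu_\V(y)$, and I would prove the localization estimate
\[
\|\nabla_yS_xT_xW_L(x,y)\|\lesssim L^{d+1}(1+L|x-y|)^{-d-1}
\]
(and the simpler $\|\nabla_yT_xK_L(x,y)\|$-analogue for \eqref{eq:th 14}). Such kernels can be assembled from the Christoffel--Darboux kernel of $\V_L$ after a smooth spectral cutoff; the pointwise decay follows by Bernstein--Markov type inequalities for algebraic polynomials on the complexification of $\V$, as in \cite{EMOC}. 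Once these estimates are in hand, the partition counting proceeds as in Proposition \ref{MZM}: for fixed $y$ one splits the indices into $J=\{j:\mathrm{dist}(2X_j,y)\ge 2\delta\}$ and $J'$, controlling the sum over $J$ by an integral (contribution of order $L$) and the sum over $J'$ by counting the inner balls $Y_j$ that can fit inside $B(y,4\delta)$. Since $\mu_\V(Y_j)\gtrsim (a^2/b)^{1}N^{-1}$ only gives the right count when coupled with the weaker, Markov-type gradient control available for algebraic polynomials (as opposed to the Bernstein-type control in the diffusion case), the resulting cardinality carries a factor of order $b(b/a)^{2d}$, which is precisely what forces the hypothesis $N\ge Cb(b/a)^{2d}L^d$ and the exponent $h=d$. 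Balancing $\varepsilon$ with this sum closes the estimate, as in \eqref{eq:CC}.

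The principal obstacle is the kernel construction: unlike the Riemannian case, where $\Psi_L$ and $W_L$ are immediately produced from the spectral cutoff $H(\lambda_k/L)$ and have built-in localization, on an algebraic manifold one has no intrinsic spectral decomposition and the off-diagonal decay has to be extracted from the algebraic/complex-analytic structure. Propagating this decay through the two differential operators $S_xT_x$ and $\nabla_y$ while retaining the quantitative pre-factor $L^{d+1}$ is the delicate technical point; all the remaining bookkeeping then mirrors Proposition \ref{MZM} verbatim.
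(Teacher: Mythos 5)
Your proposal takes a genuinely different route from the paper, and it has a concrete gap at the point you yourself flag as the ``principal obstacle.''

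The paper's proof of \eqref{eq:th 14} does not mimic the Riemannian kernel argument of Proposition~\ref{MZM} at all. It is considerably more elementary: triangle and reverse-triangle inequalities give $\big|\int_\V|P|-\sum_j\omega_j|P(x_j)|\big|\le \sum_j\int_{R_j}|P(x)-P(x_j)|\,d\mu_\V$, and then a mean-value estimate reduces everything to bounding $\sum_j\omega_j\|\nabla P(x'_j)\|$ for suitable $x'_j\in 2X_j$. That sum is controlled by the complexification Bernstein--Markov inequality from \cite{BC}, namely $\|\nabla P(x'_j)\|\lesssim L^{2d+1}\int_{B_\Y(x'_j,L^{-1})}|P|\,d\mu_\Y$, followed by an overlap count for the balls $B_\Y(x'_j,L^{-1})$ (using the inner balls $Y_j$ from the partition) and by \cite[Lemma~3.1]{EMOC} to compare the integral over $\Y$ with the integral over $\V$. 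No reproducing kernel, no companion kernel $W_L$, no Green identity, and no smooth truncation $v_\varepsilon$ are used for \eqref{eq:th 14}. The inequality \eqref{eq:th 14 b} is then deduced from \eqref{eq:th 14} as in \cite{EMOC}, rather than proved independently by a two-vector-field setup.

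The gap in your approach is the existence of the kernel pair $(K_L,W_L)$ for $\V_L^0$ with the Green-type reproducing identity and the localization bound $\|\nabla_y S_x T_x W_L(x,y)\|\lesssim L^{d+1}(1+L|x-y|)^{-d-1}$. In the Riemannian case this estimate is pulled from \cite{GG}, and it rests crucially on the spectral decomposition $W_L(x,y)=\sum_{\lambda_k>0}\lambda_k^{-2}H(\lambda_k/L)\varphi_k(x)\varphi_k(y)$ with a smooth cutoff $H$; the super-polynomial off-diagonal decay comes from the smoothness of $H$ in the Littlewood--Paley parameter. The algebraic polynomial space $\V_L$ has no intrinsic Laplacian spectrum, so there is no such cutoff. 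The Christoffel--Darboux kernel of $\V_L$ is a reproducing kernel, but its off-diagonal decay is only algebraic (think of the Dirichlet kernel for trigonometric polynomials), not of the required form $(1+L|x-y|)^{-d-1}$, and there is no natural ``$\Delta^{-1}$''-type companion $W_L$ with $\Delta_y W_L=K_L$ in the algebraic category. The Bernstein--Markov inequality on the complexification, which you invoke as a side remark, cannot produce such a kernel estimate; the paper instead uses it directly as a pointwise-to-local-$L^1$ bound on $\nabla P$ and sidesteps kernel localization entirely. Your bookkeeping also attributes the exponent $(b/a)^{2d}$ to ``weaker Markov-type gradient control,'' but in the paper it arises from the $L^{2d+1}$ prefactor in the complex Bernstein--Markov estimate (the complexification has real dimension $2d$) combined with the $L^{-2d}$ loss in passing from $\mu_\Y$ to $\mu_\V$ and the $b/a^2$ from the overlap count; this is a different mechanism than the one you describe.

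So: the partition bookkeeping and the $J/J'$ split you propose are fine and parallel the paper, but the central analytic step you would need --- the localized algebraic $W_L$ --- is not established and is likely false as stated, whereas the paper's complexification-and-counting argument avoids it and is the right tool here.
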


\begin{proof}[Proof of Proposition \ref{th:MZ 2}]
We follow the equal weight case in \cite{EMOC} with minor technical modifications. We start with \eqref{eq:th 14}. 
%
%
Triangle and reverse triangle inequalities lead to
\begin{align*}
\Big| \int_{\V} |P(x)| d\mu_{\V}(x)  -  \sum_{j=1}^N \omega_j |P(x_j)| \Big|& 
= \Big| \sum_{j=1}^N\int_{R_j} |P(x)| d\mu_{\V}(x)  -  \sum_{j=1}^N \int_{R_j} |P(x_j)| d\mu_{\V}(x)\Big|\\
& \leq \sum_{j=1}^N\int_{R_j}\big||P(x)|   -   |P(x_j)|\big| d\mu_{\V}(x)\\
& \leq \sum_{j=1}^N\int_{R_j}\big|P(x)   -  P(x_j)\big| d\mu_{\V}(x).
\intertext{There are $x_j'\in 2X_j$, $j=1,\ldots,N$,  such that $\|\nabla P(x_j')\|\geq \|\nabla P(x)\|$, for all $x\in 2X_j$. Since $\diam(X_j)\leq2c_4b^{1/d}N^{-1/d}$,}
\Big| \int_{\V} |P(x)| d\mu_{\V}(x)  -  \sum_{j=1}^N \omega_j |P(x_j)| \Big|& \leq \sum_{j=1}^N \omega_j\diam(2X_j)\|\nabla P(x_j')\|\\
&\leq 2c_4b^{1/d}N^{-1/d}\sum_{j=1}^N \omega_j\|\nabla P(x_j')\|.
\end{align*}

Let $\mathbb{Y}$ denote the complexification of $\V$, which consists of the complex zeros of the ideal defining $\V$. Assume $N\ge 8^dc_4^db L^d$, so that $2X_j\subset B(x'_j,L^{-1})$. From \cite[Section 2.4]{BC}, see also \cite{EMOC}, we know that since $P\in\V_L^0$,
\begin{equation*}
\|\nabla P(x_j')\|\lesssim  L^{2d+1} \int_{B_{\mathbb{Y}}(x_j',L^{-1})} |P(z)| d\mu_{\mathbb{Y}}(z),
\end{equation*}
where $B_{\mathbb{Y}}(x_j',L^{-1})$ denotes the ball in $\mathbb{Y}$ of radius $L^{-1}$ centered at $x_j'$ and $\mu_{\Y}$ denotes the measure on the complexification. 

If $\pi$ is any permutation of $\{1,\ldots,N\}$, for which 
\begin{equation*}
\bigcap_{i=1}^m B_{\Y}(x_{\pi(i)}',L^{-1})\neq \emptyset,
\end{equation*}
then, as in \cite{EMOC}, for a constant $\beta\geq1$ depending only on $\V$, 
\begin{equation*}
\bigcap_{i=1}^m B(x_{\pi(i)}',\beta L^{-1})\neq \emptyset.
\end{equation*}
Let $z_{\pi}$ be a point in this intersection. A volume comparison implies
\[
m\leq \frac{\mu_{\V}(B(z_\pi,2\beta L^{-1})}{\min_{i=1,\ldots,m}\{\mu_{\V}(Y_{\pi(i)})\}}\leq \frac{2^dc_2\beta^dbN}{c_1c_3^da^2L^d}.
\]
Therefore, we derive
\begin{align*}
\sum_{j=1}^N \int_{B_{\Y}(x_j',L^{-1})} |P(z)| d\mu_{\Y}(z) &\leq \frac{2^dc_2\beta^dbN}{c_1c_3^da^2L^d} \int_{\bigcup_{j=1}^N B_{\Y}(x_{j}',L^{-1})}|P(z)| d\mu_{\Y}(z)
\intertext{According to \cite[Lemma 3.1]{EMOC}, this leads to }
\sum_{j=1}^N \int_{B_{\Y}(x_j',L^{-1})} |P(z)| d\mu_{\Y}(z) &\lesssim \frac{2^dc_2\beta^{d}b}{c_1c_3^da^2}\frac{N}{L^{2d}}\int_{\V} |P(x)|d\mu_{\V}(x),
\end{align*}
so that we obtain
\begin{align*}
\sum_{j=1}^N \omega_j\|\nabla P(x_j')\|&\lesssim\frac{2^dc_2\beta^{d}b^2}{c_1c_3^da^2}L\int_{\V} |P(x)|d\mu_{\V}(x).
\end{align*}
Putting all this together, we derive
\begin{align*}
\Big|\int_{\V} |P(x)| d\mu_{\V}(x)  -  \sum_{j=1}^N \omega_j |P(x_j)| \Big| &\lesssim 
2c_4b^{1/d}N^{-1/d}   \frac{2^dc_2\beta^{d}b^2}{c_1c_3^da^2}L   \int_{\V} |P(x)|d\mu_{\V}(x)\\
&\lesssim \frac{2^{d+1}c_2c_4\beta^db^{2+1/d}}{c_1c_3^da^2}LN^{-1/d}   \int_{\V} |P(x)|d\mu_{\V}(x)\\
&\leq \frac 12 \int_{\V} |P(x)|d\mu_{\V}(x)
\end{align*}
as long as one adjusts the constant in the assumption $N\gtrsim b(\frac{b}{a})^{2d} L^d$. 

The inequality \eqref{eq:th 14 b} follows from \eqref{eq:th 14} as in \cite{EMOC}. We omit the details. 
\end{proof}

\section{Gradient flow}\label{SecP4}

We now apply Propositions \ref{MZM} and \ref{th:MZ 2} to verify Lemma \ref{prop_4}. 
\begin{proof}[Proof of Lemma \ref{prop_4}]
Here we follow closely the proof from \cite[Section 4]{BRV}.
According to Proposition \ref{partition} there exists a partition of $\X$ with constants $a$, $b$, $c_3$ and $c_4$, $\mathcal R=\{R_j\}_{j=1}^N$, such that $\mu_{\M}(R_j)=\omega_j$ for all $j=1,\ldots, N$. 
Recall that $h=1$ if $\X=\M$ and $h=d$ if $\X=\V$.
Let now $N\geq C(\X,c_3,13c_4)b (b/a)^{2h} L^d$, where $C(\X,\cdot,\cdot)$ is as in Propositions \ref{MZM} and \ref{th:MZ 2}.
We start choosing an arbitrary $x_j \in R_j$ for all $1\leq j \leq N$ and consider the map $U:\X_{L}^{0}\to \mathcal{X}(\X)$
\begin{equation*}
U(P)(y)=\frac{\nabla P(y)}{v_{\varepsilon}\left(\|\nabla P(y) \| \right)},
\end{equation*}
where with $\mathcal{X}(\X)$ we call the space of differentiable vector fields on $\X$ and $v_{\varepsilon}$ is as in equation \eqref{eq:v}.
For each $1\leq j \leq N$ let $y_j : \X_{L}^{0} \times [0,+\infty) \rightarrow \X$ be the map satisfying the differential equation
\begin{equation*}
\left \{\begin{array}{ll}
      \dfrac{d}{dt}y_j (P,t) = U(P)(y_j (P,t))\\
      y_j(P,0) = x_j
    \end{array}
  \right.
\end{equation*}
for each $P \in \X_{L}^{0}$. 
For every $P\in \X_{L}^{0}$ and for every $j$, the map $t\to y_{j}(P,t)$ is defined and smooth on the whole real line (see \cite[Theorem 6, p. 147]{Spivak}). Furthemore, $U(P)(y)$ is Lipschitz continuous with respect to $P$. It follows that for each $j$ the map $P\to y_{j}(P,\cdot)$ is continuous in $P$ (see \cite[Corollary 1.6, p. 68]{Lang}).
Now set 
\begin{equation*}
F(P)
=
\left(
x_{1} (P),...,x_{N} (P)
\right)
=
\left(
y_{1} \left( P, 12c_4b^{\frac1d}N^{-\frac{1}{d}} \right) ,...,y_{N} \left( P, 12c_4b^{\frac1d}N^{-\frac{1}{d}}\right)
\right).
\end{equation*}
By the above considerations, we have that $F$ is continuous on $\X_{L}^{0}$. Let us take $P \in \X_{L}^{0}$ such that
\begin{equation*}
\int_{\X} ||\nabla P(x)|| d\mu_{\X}(x) = 1,
\end{equation*}
which means $P \in \partial\Omega$, where $\Omega$ is defined as (\ref{Omega}).
Then we can split
\begin{align*}
\sum_{j=1}^{N} 
\omega_{j}
P(x_j(P))
&=
\sum_{j=1}^{N} 
\omega_{j}
P \left( y_{j} \left( P, 12c_4b^{\frac1d}N^{-\frac{1}{d}} \right) \right)\\
&=
\sum_{j=1}^{N} 
\omega_{j}
P(x_j)
+
\int_{0}^{12c_4b^{\frac1d}N^{-\frac{1}{d}}} 
\frac{d}{dt} 
\left(
\sum_{j=1}^{N} 
\omega_{j}
P(y_{j} \left( P, t \right))
\right)dt.
\end{align*}
Observe first that
\begin{multline*}
\left|
\sum_{j=1}^{N} 
\omega_{j}
P(x_j)
\right|
=
\left|
\sum_{j=1}^{N} 
\int_{R_j}
(P(x_j) - P(x)) d\mu_{\X} (x)
\right|
\leq
\sum_{j=1}^{N} 
\int_{R_j}
\left| P(x_j) - P(x)\right| d\mu_{\X} (x)
\\
\leq
\sum_{j=1}^{N} 
\omega_{j}
{\rm diam}(R_j) \max\limits_{z\in 2X_j} ||\nabla P(z)||
\leq
\frac{2c_4b^{ \frac{1}{d}}}{N^{ \frac{1}{d}}}
\sum_{j=1}^{N} 
\omega_j
 ||\nabla P(z_j)||,
\end{multline*}
where $z_j$ is the point that realizes the maximum and $X_j$ is the geodesic ball described in Definition \ref{def:partition}.
We consider now the partition $\mathcal{R'} = \{ R_{1}' , ... ,R_{N}' \}$ where $R_{j}' = R_{j}\cup \{z_j\}$.
Notice that this is a partition of $\X$ with constants  $a$, $b$, $c_3$, $2c_4$ according to Definition \ref{def:partition}, and $\mu_{\M}(R'_j)=\omega_j$ for all $j=1,\ldots, N$.
Therefore, by Propositions \ref{MZM} and \ref{th:MZ 2} applied to $P\in \partial \Omega$ and the partition $\mathcal{R'}$, we have
\begin{multline*}
\left|
\sum_{j=1}^{N} 
\omega_{j}
P(x_j)
\right|
\leq
\frac{2c_4b^{ \frac{1}{d}}}{N^{\frac{1}{d}}}
\sum_{j=1}^{N} 
\omega_i
 ||\nabla P(z_j)||
 \\
 \leq
\frac{2c_4b^{ \frac{1}{d}}}{N^{\frac{1}{d}}}
\left|
\sum_{j=1}^{N} 
\omega_i
 ||\nabla P(z_j)||
 -
 \int_{\X}
  ||\nabla P(z)|| d\mu_{\X} (z)
\right|
+
\frac{2c_4b^{ \frac{1}{d}}}{N^{ \frac{1}{d}}}
 \int_{\X}
  ||\nabla P(z)|| d\mu_{\X} (z)
\\
\leq
\frac{3c_4b^{ \frac{1}{d}}}{N^{ \frac{1}{d}}}
 \int_{\X}
  ||\nabla P(z)|| d\mu_{\X} (z)
  =
  \frac{3c_4b^{ \frac{1}{d}}}{N^{ \frac{1}{d}}}.
\end{multline*}
Furthemore, for $t \in [0, 12c_4b^{\frac1d}N^{-\frac{1}{d}}]$, we have
\begin{multline*}
\frac{d}{dt} 
\left(
\sum_{j=1}^{N} 
\omega_{j}
P(y_{j} \left( P, t \right))
\right)
=
\sum_{j=1}^{N} 
\omega_{j}
\frac{||\nabla P(y_{j} \left( P, t \right))||^2}{v_\epsilon (||\nabla P(y_{j} \left( P, t \right))||)}
\\
\geq
\sum_{j:||\nabla P(y_{j} \left( P, t \right))|| \geq \epsilon} 
\omega_{j}
||\nabla P(y_{j} \left( P, t \right))||
\geq 
\sum_{j=1}^{N} 
\omega_{j}
||\nabla P(y_{j} \left( P, t \right))||
- \epsilon.
\end{multline*}

Since $|y_{j} \left( P, t \right) - x_j|\leq t$, the partition $\mathcal{R''} = \{ R_{1}'' , ... ,R_{N}'' \}$ where $R_{j}'' = R_{j} \cup \{ y_{j} \left( P, t \right) \}$ is a partition of $\X$  with constants $a$, $b$, $c_3$ and $13c_4$, according to  Definition \ref{def:partition}, and $\mu_{\X}(R''_j)=\omega_j$ for all $j=1,\ldots, N$.
We can therefore now apply Propositions \ref{MZM} and \ref{th:MZ 2} to $P$ and the new partition $\mathcal{R''}$:
\begin{multline*}
\frac{d}{dt} 
\left(
\sum_{j=1}^{N} 
\omega_{j}
P(y_{j} \left( P, t \right))
\right)
\geq 
\sum_{j=1}^{N} 
\omega_{j}
||\nabla P(y_{j} \left( P, t \right))||
- \epsilon
\\
\geq
\int_{\X}
||\nabla P(y)||d\mu_{\X} (y)
-
\left|
\int_{\X}
||\nabla P(y)||d\mu_{\X} (y)
-
\sum_{j=1}^{N} 
\omega_{j}
||\nabla P(y_{j} \left( P, t \right))||
\right|
- \epsilon
\\
\geq
\frac{1}{2} \int_{\X}
||\nabla P(y)||d\mu_{\X} (y)
- \epsilon
=
\frac{1}{2} - \epsilon,
\end{multline*}
for every $P \in \partial \Omega$ and for every $t \in [0, 12c_4b^{\frac1d}N^{-\frac{1}{d}}]$.
In conclusion we obtain
\begin{multline*}
\sum_{j=1}^{N} 
\omega_{j}
P(x_j(P))
=
\sum_{j=1}^{N} 
\omega_{j}
P(x_j)
+
\int_{0}^{12c_4b^{\frac1d}N^{-\frac{1}{d}}} 
\frac{d}{dt} 
\left(
\sum_{j=1}^{N} 
\omega_{j}
P(y_{j} \left( P, t \right))
\right)
\\
\geq
\frac{12c_4b^{ \frac{1}{d}}}{N^{ \frac{1}{d}}}
\left(
\frac{1}{2} - \epsilon
\right)
-
\frac{3c_4b^{ \frac{1}{d}}}{N^{ \frac{1}{d}}}
=
(3 - 12\epsilon)
\frac{c_4b^{ \frac{1}{d}}}{N^{ \frac{1}{d}}}
>
0.
\end{multline*}
\end{proof}

\appendix
\section{Riemannian manifolds}\label{SecM}

Here we present some known results about Riemannian manifolds, we refer the  reader \cite{DC} for a more detailed study on this topic. 

A Riemannian metric on a differentiable manifold $\mathcal M$ is a correspondence which associates to each point $p\in\mathcal M$ an inner product $\langle\cdot,\cdot\rangle_p$ (that is, a symmetric, bilinear, positive definite form) on the tangent space $T_p\mathcal M$ which varies differentiably in the sense that 
if $(U,x)$ is a local chart around $p$ and if $q=x(x_1,\ldots,x_d)$ then 
\[
g_{ij}(x_1,\ldots,x_d)=\left\langle\left(\frac\partial{\partial x_i}\right)_q,\left(\frac\partial{\partial x_j}\right)_q\right\rangle_q
\]
is a differentiable function on $U$. A differentiable manifold with a given Riemannian metric will be called 
a Riemannian manifold. If $v$ is a tangent vector to $\mathcal M$ at $p$, we set $\|v\|^2_p=\langle v,v\rangle_p$. The length of a differentiable curve $\alpha$ from the interval $[a,b]$ to $\mathcal M$
is defined as
\[
\int_a^b\|\alpha'(t)\|_{\alpha(t)}dt.
\]

We define the distance $|p-q|$ between two points in a Riemannian manifold, $p,\,q\in\mathcal M$, as 
the infimum of the lengths of all the differentiable curves joining $p$ and $q$. This is indeed a distance,
and it turns $\mathcal M$ into a metric space that has the same topology as the manifold's natural topology.

If $\mathcal M$ is a compact Riemannian manifold, then for any two points $p$ and $q$ in $\mathcal M$
there exists at least one differentiable curve $\alpha$ joining $p$ and $q$ that realizes the infimum of the lengths of all the differentiable curves joining $p$ and $q$.
A curve $\alpha$ is called geodesic if the covariant derivative of $\alpha'$ along $\alpha$ equals zero (see \cite{DC} for a precise definition of covariant derivative). 
Here we recall only that if $\alpha$ is a geodesic then $\|\alpha'(t)\|_{\alpha(t)}$ is constant, and one can normalize $\alpha$ in such a way that   $\|\alpha'(t)\|_{\alpha(t)}=1$.

It can be shown that there exist two positive constants $c_1$ and $c_2$ such that for any point $x\in\mathcal M$ and for any radius $r\le\mathrm{diam}(\mathcal M)$, the measure of the ball 
$B(x,r)=\{y\in\mathcal M:|x-y|<r\}$ satisfies  the inequalities
\begin{equation*}
c_1 r^d\leq\mu_{\M}(B(x,r))\leq c_2 r^d.
\end{equation*}
It follows easily that there exists a positive constant $c_5$ such that if $f:[0,+\infty)\to[0,+\infty)$ is a decreasing function and if $x\in\mathcal M$ then 
\begin{equation}
\label{radiale}
\int_{\mathcal M}f(|x-y|)d\mu_{\M}(y)\le c_5\int_{0}^{+\infty}f(t)t^{d-1}dt.
\end{equation}	

Let $\mathcal D(\mathcal M)$ the set of functions on $\M$ differentiable in $p$. For any $f\in\mathcal D(\mathcal M)$ we define the gradient of $f$ as a vector field $\nabla f$ on the Riemannian manifold $\mathcal M$ given by 
\begin{equation}\label{grad}
\langle\nabla f(p),v\rangle_p=vf,\quad p\in\mathcal M,\,v\in T_p\mathcal M.
\end{equation}
In local coordinates, the gradient is given by the formula
\[
\nabla f(p)=\sum_{j=1}^d\sum_{i=1}^dg^{ij}\left(\frac{\partial }{\partial x_i} f\right)\frac{\partial }{\partial x_j},
\]
where $g^{ij}$ are the entries of the inverse matrix of $g_{ij}$. It follows from the definition that 
\[
\|\nabla f\|_p=\sup Xf
\]
where the supremum is taken over all the differentiable vector fields $X$ with norm $\|X\|_p\le 1$.

If $f$ is a differentiable function on $\mathcal{M}$ and $\alpha:[0,|p-q|]\to \mathcal M$ is a normalized geodesic joining $p$ and $q$, then 
\begin{align}\label{geodesic}
\nonumber
 |f(p)-f(q)|&=\left|\int_0^{|p-q|}\frac{d}{dt}(f(\alpha(t)))dt\right|=\left|\int_{0}^{|p-q|}\langle\nabla f(\alpha(t)),\,\alpha'(t)\rangle dt\right|\\
&\le |p-q|\sup_{t}\|\nabla f(\alpha(t))\|. 
\end{align}


One can also define the divergence of a differentiable vector field $X$. In local coordinates one has
\[
\mathrm{div} (X)=\frac{1}{\sqrt{\mathrm{det}(g_{ij})}}\sum_{k=1}^d\frac{\partial}{\partial x_k}\left(\sqrt{\mathrm{det}(g_{ij}})X_k\right),\quad X=\sum_{k=1}^d X_k\frac{\partial}{\partial x_k}.
\]

Finally, the Laplace-Beltrami operator $\Delta:\mathcal D(\mathcal M)\to\mathcal D(\mathcal M)$ is defined as
\[
\Delta f=-\mathrm{div}(\nabla f).
\]

The eigenvalues and eigenfunctions of the Laplacian have been deeply studied, see \cite{Z} for a recent survey on this topic.
Here we will just metion a couple of properties that we use on the proofs.
\begin{prop}\label{isometry}
The Laplacian commutes with isometries.
That is, given any two manifolds $\M$, $\mathcal{N}$ and an isometry $u: \M \longrightarrow \mathcal{N}$ between them, we have that
\begin{equation*}
\Delta_{\M} (f\circ u) \circ u^{-1}= \Delta_{\mathcal{N}} f,
\end{equation*}
for all smooth $f:\mathcal{N}\rightarrow\mathbb{C}$.
\end{prop}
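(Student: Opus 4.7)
The plan is to decompose the statement along the definition of the Laplace--Beltrami operator $\Delta=-\mathrm{div}\circ\nabla$: I would prove separately that the gradient transforms naturally under the isometry $u$, and that the divergence commutes with $u$ when applied to transported vector fields, and then chain the two results.

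For the gradient, I would use the intrinsic characterization \eqref{grad}. Fix $p\in\mathcal M$, $v\in T_p\mathcal M$, and set $q=u(p)$. By the chain rule, $v(f\circ u)=(du_p(v))f$. Applying the definition of $\nabla_{\mathcal N}f$ at $q$ to the tangent vector $du_p(v)$ gives $(du_p(v))f=\langle\nabla_{\mathcal N}f(q),du_p(v)\rangle_q$. Since $u$ is an isometry, $\langle du_p(\nabla_{\mathcal M}(f\circ u)(p)),du_p(v)\rangle_q=\langle\nabla_{\mathcal M}(f\circ u)(p),v\rangle_p=v(f\circ u)$. Comparing, and using that $du_p:T_p\mathcal M\to T_q\mathcal N$ is a linear isomorphism, yields
\[
du_p\bigl(\nabla_{\mathcal M}(f\circ u)(p)\bigr)=\nabla_{\mathcal N}f(u(p)),
\]
i.e.\ $\nabla_{\mathcal M}(f\circ u)$ is the pullback of $\nabla_{\mathcal N}f$ under $u$.

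For the divergence, I would work in local coordinates. Pick a chart $(U,x)$ around $p$; since $u$ is a diffeomorphism, $(u(U),x\circ u^{-1})$ is a chart around $q=u(p)$. Because $u$ preserves the inner product on tangent vectors, the components $g_{ij}$ of the metric tensor in these two charts are the same function of the coordinate values. The explicit coordinate formula
\[
\mathrm{div}(X)=\frac{1}{\sqrt{\det g_{ij}}}\sum_{k=1}^d\frac{\partial}{\partial x_k}\!\left(\sqrt{\det g_{ij}}\,X_k\right)
\]
therefore gives identical expressions on both sides, once a vector field $X$ on $\mathcal M$ with components $X_k$ is identified with the pushforward vector field $u_*X$ on $\mathcal N$ whose components in the transported chart are the same. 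Consequently $(\mathrm{div}_{\mathcal M}X)\circ u^{-1}=\mathrm{div}_{\mathcal N}(u_*X)$ for every smooth vector field $X$ on $\mathcal M$.

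Combining the two steps, the gradient computation shows that $u_*(\nabla_{\mathcal M}(f\circ u))=\nabla_{\mathcal N}f$, and the divergence computation then yields
\[
\mathrm{div}_{\mathcal M}\bigl(\nabla_{\mathcal M}(f\circ u)\bigr)\circ u^{-1}=\mathrm{div}_{\mathcal N}\bigl(\nabla_{\mathcal N}f\bigr),
\]
which, after multiplication by $-1$, is exactly $\Delta_{\mathcal M}(f\circ u)\circ u^{-1}=\Delta_{\mathcal N}f$. I do not anticipate a real obstacle, since both the gradient and divergence are intrinsic to the Riemannian metric; the only care needed is in bookkeeping the chain rule and the correspondence between $X$ on $\mathcal M$ and its pushforward $u_*X$ on $\mathcal N$, so that the two coordinate expressions really match after the change of chart induced by $u$.
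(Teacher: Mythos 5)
Your proof is correct, and it is genuinely more informative than what the paper offers: the paper's ``proof'' of Proposition~\ref{isometry} consists entirely of a reference to the book \cite{Ch}, with no argument given. So there is no in-paper proof to compare against, and your write-up supplies one.

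Your decomposition through $\Delta=-\mathrm{div}\circ\nabla$ is the natural one, and both halves are sound. For the gradient step, the chain rule $v(f\circ u)=(du_p v)f$, the defining identity \eqref{grad} on $\mathcal N$, the isometry condition $\langle du_p w,du_p v\rangle_q=\langle w,v\rangle_p$, and the fact that $du_p$ is an isomorphism combine exactly as you say to give $u_*\bigl(\nabla_{\M}(f\circ u)\bigr)=\nabla_{\mathcal N}f$. For the divergence step, the key observation is that in the transported chart $(u(U),x\circ u^{-1})$ the coordinate vector fields satisfy $u_*\bigl(\partial/\partial x_i\bigr)=\partial/\partial(x\circ u^{-1})_i$, so that the metric components $g_{ij}$ and the components of $u_*X$ become the same functions of the coordinate value as $g_{ij}$ and $X_k$ were on $\M$; the coordinate formula for $\mathrm{div}$ then literally reproduces itself, yielding $(\mathrm{div}_{\M}X)\circ u^{-1}=\mathrm{div}_{\mathcal N}(u_*X)$. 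Chaining the two and multiplying by $-1$ gives the claim. If you wanted to make the divergence step chart-free, an alternative is to characterize $\mathrm{div}$ via Green's identity \eqref{Green} and use the change-of-variables formula together with the gradient step, but the coordinate calculation you give is perfectly adequate and arguably more elementary.
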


\begin{proof}
For this proof and related issues we refer to \cite{Ch}.
\end{proof}

\begin{prop}[Sturm-Liouville's decomposition]\label{SL}
Given a compact Riemannian manifold $\M$ with a normalized measure induced by the Riemannian metric $\mu_{\M}$ such that $\mu_{\M}(\M) = 1$,  there is an orthonormal basis $\{ \varphi_{j} \}_{j=1}^{\infty}$ of eigenfunctions of the Laplacian $\Delta$ with respective eigenvalues $0 \leq \lambda_{1} \leq \ldots $ such that any function $f \in L^{2} (\M,d\mu_{\M})$ can
be written as a convergent series in $L^{2} (\M,d\mu_{\M})$
\begin{equation*}
f =
\sum_{j=1}^{\infty} 
a_{j}
\varphi_{j}
\end{equation*}
for some coefficients $a_{j} \in \mathbb{R}$.
\end{prop}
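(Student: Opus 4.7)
The plan is to derive this classical spectral decomposition from the spectral theorem for compact self-adjoint operators. First, I would verify that $\Delta$, initially regarded as acting on $C^{\infty}(\M)$, is symmetric and nonnegative on $L^{2}(\M, d\mu_{\M})$. Applying the Green identity on the closed manifold $\M$ (no boundary terms since $\M$ has no boundary) yields
\[
\int_{\M} (\Delta f)\, g \, d\mu_{\M} \;=\; \int_{\M} \langle \nabla f, \nabla g\rangle \, d\mu_{\M} \;=\; \int_{\M} f \,(\Delta g) \, d\mu_{\M}
\]
for $f,g\in C^{\infty}(\M)$, which gives symmetry and, taking $g=f$, nonnegativity.

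Next, I would upgrade $\Delta$ to a densely-defined self-adjoint operator on $L^{2}(\M, d\mu_{\M})$. The cleanest route is the Friedrichs extension $\widetilde{\Delta}$ associated with the closed, symmetric, nonnegative quadratic form $Q(f,g)=\int_{\M}\langle \nabla f,\nabla g\rangle\, d\mu_{\M}$ on its natural domain, the Sobolev space $H^{1}(\M)$. Then $I+\widetilde{\Delta}$ is bounded below by $1$, so the resolvent $R:=(I+\widetilde{\Delta})^{-1}:L^{2}(\M)\to L^{2}(\M)$ exists as a bounded self-adjoint operator. The crucial step is compactness of $R$: interior elliptic regularity shows that $R$ maps $L^{2}(\M)$ continuously into $H^{2}(\M)$, and the Rellich-Kondrachov theorem, available because $\M$ is compact, provides a compact embedding $H^{2}(\M)\hookrightarrow L^{2}(\M)$. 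Composing these, $R$ is compact on $L^{2}(\M)$.

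Finally, the spectral theorem for compact self-adjoint operators on the separable Hilbert space $L^{2}(\M,d\mu_{\M})$ furnishes an orthonormal basis $\{\varphi_{j}\}_{j=1}^{\infty}$ of eigenfunctions of $R$, with positive eigenvalues $\mu_{j}\downarrow 0$. A direct computation turns these into eigenfunctions of $\widetilde{\Delta}$ with eigenvalues $\lambda_{j}=\mu_{j}^{-1}-1\uparrow \infty$; reindexing in nondecreasing order yields the claimed sequence $0\leq \lambda_{1}\leq\lambda_{2}\leq\ldots$, with the eigenvalue $0$ one-dimensional (spanned by constants) because $\M$ is connected. Elliptic regularity then bootstraps each $\varphi_{j}$ to $C^{\infty}(\M)$, so they are honest smooth eigenfunctions of the original Laplacian, and the $L^{2}$-convergent Fourier-type expansion $f=\sum_{j} a_{j}\varphi_{j}$ is simply the orthogonal expansion with respect to this basis. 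The main obstacle is the functional-analytic setup on a manifold — the Friedrichs extension, interior elliptic regularity, and Rellich-Kondrachov — but all three are standard and, as the authors note, can be extracted from Chavel's text.
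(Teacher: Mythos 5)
Your proof is correct and is exactly the standard spectral-theoretic argument (symmetry and nonnegativity via Green's identity, Friedrichs extension, compactness of the resolvent via elliptic regularity plus Rellich--Kondrachov, spectral theorem for compact self-adjoint operators, and elliptic bootstrapping to recover smoothness). The paper offers no proof of this proposition at all --- it records it in the appendix as a classical fact and defers to the surrounding references (Chavel, Zelditch) --- so your argument is precisely the textbook treatment the authors are implicitly invoking.
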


Finally, let $\mathcal M$ be a compact oriented Riemannian manifold, let $f$ be a differentiable function on $\mathcal{M}$ and $X$ be a differentiable vector field.  The following Green identity (see \cite[page 267]{L}) holds
\begin{equation}
\label{Green}
\int_{\mathcal M} \langle \nabla f, X\rangle d\mu_{\M}=-\int_{\mathcal M}f \mathrm{div}(X)d\mu_{\M}.
\end{equation}

\end{document}